\def\version{19 November 2019}
\definecolor{gray}{rgb}{0.93,0.93,0.93}
\definecolor{light-gold}{rgb}{0.99,0.97,0.78}
\definecolor{gold}{rgb}{0.7,0.55,0}
\def\be{\begin{equation}}
\def\ee{\end{equation}}
\def\bm{\begin{multline}}
\def\bfig{\begin{figure}[htb]}
\def\efig{\end{figure}}
\newcommand{\dd}{{\rm d}}
\newcommand{\e}[1]{\,{\rm e}^{#1}\,}
\newcommand{\eqd}{\overset{\rm d}{=}}
\numberwithin{equation}{section}
\newtheorem{theorem}{Theorem}[section]
\newtheorem{proposition}[theorem]{Proposition}
\newtheorem{lemma}[theorem]{Lemma}
\newtheorem*{definition}{Definition}
\newtheorem{question}{Question}
\newcommand{\eps}{{\varepsilon}}
\newcommand{\bbC}{{\mathbb C}}
\newcommand{\bbE}{{\mathbb E}}
\newcommand{\bbN}{{\mathbb N}}
\newcommand{\caR}{{\mathcal R}}
\newcommand{\sss}{\scriptscriptstyle}
  \def\tagform@#1{\maketag@@@{\scriptsize{(#1)}\@@italiccorr}}
\renewcommand{\eqref}[1]{(\ref{#1})}
\newcommand{\EE}{\mathbb{E}}
\renewcommand{\a}{\alpha}
\newcommand{\G}{\Gamma}
\newcommand{\cI}{\mathcal{I}}
\newcommand{\cD}{\mathcal{D}}
\newcommand{\oo}{\infty}
\newcommand{\om}{\omega}
\begin{document}

{\hfill\small \version} \vspace{2mm}

\title{Characterising random partitions by random colouring}

\author{Jakob E. Bj\"ornberg}
\address{Department of Mathematics,
Chalmers University of Technology and the University of Gothenburg,
Sweden}
\email{jakob.bjornberg@gu.se}
 
\author{C\'ecile Mailler}
\address{Department of Mathematical Sciences, University of Bath, Bath BA2 7AY, United Kingdom}
\email{c.mailler@bath.ac.uk}

\author{Peter M\"orters}
\address{Mathematisches Institut, Universit\"at zu K\"oln, Weyertal 86--90, 50931 K\"oln,
Germany}
\email{moerters@math.uni-koeln.de}

\author{Daniel Ueltschi}
\address{Department of Mathematics, University of Warwick,
Coventry, CV4 7AL, United Kingdom}
\email{daniel@ueltschi.org}

\subjclass{60E10, 60G57, 60K35}


\begin{abstract}
Let $(X_1,X_2,...)$ be a random
partition of the unit interval $[0,1]$,
i.e.\ $X_i\geq0$ and $\sum_{i\geq1} X_i=1$, 
and let $(\varepsilon_1,\varepsilon_2,...)$ be i.i.d.\ Bernoulli
random variables of parameter $p \in (0,1)$. The {\it Bernoulli
  convolution} of the partition is the random variable 
$Z =\sum_{i\geq1} \varepsilon_i X_i$. 
The question addressed in this article is: Knowing the distribution of
$Z$ for some fixed $p\in(0,1)$, what can we infer about the random partition $(X_1, X_2,...)$?
We consider random partitions formed by 
residual allocation and prove that their distributions are fully characterised
by their Bernoulli convolution if and only if the parameter $p$ is
not equal to $\nicefrac12$.
\end{abstract}

\thanks{\copyright{} 2019 by the authors. This paper may be reproduced, in its
entirety, for non-commercial purposes.}

\maketitle

\section{Introduction}

Random partitions appear in the mathematical description of many
natural systems, such as particle clustering and condensation in
physics \cite{Betz-Ueltschi}; 
dynamics of gene populations in biology \cite{Ewens}; 
wealth distribution
in economics \cite{Sosnovskiy}; etc. 
There is a vast amount of possible probability laws of random
partitions, but one often encounters convergence to one of a few universal 
laws, most  notably the Poisson--Dirichlet distribution with parameter $\theta>0$, 
henceforth denoted~$\mathtt{PD}(\theta)$ and defined 
 below after Eq.\ \eqref{def RA}.

To show convergence of a tight sequence of random partitions it is often feasible to 
show convergence of a derived quantity like the Bernoulli convolutions studied in this paper.
If the limit of the derived quantity characterises the law of the underlying random partition among the class of possible limits, convergence is shown. It is therefore an important question whether the distribution of a random partition can be identified from its Bernoulli convolution, and in this paper we contribute to this problem.

We describe two scenarios that motivate this study in Sections
\ref{sec rand int} and \ref{sec S T}. We introduce the precise setting
and our results in Section \ref{sec setting} --- the definition of the
Bernoulli convolution can be found around Eq.\ \eqref{def Z}. Sections
\ref{sec unique} and \ref{sec non-unique} contain the proofs of our
two theorems. 
We make further comments in Section \ref{sec questions}; it includes a
counterexample due to A.\ Holroyd, that sheds much light on these
questions.

\subsection{Random interchange model and quantum spin systems}
\label{sec rand int}

The random interchange model is a process on permutations
constructed as products of random transpositions. 
Namely, given 
integers $n$ and $k$, we pick $k$ pairs of distinct integers 
$(x_1,y_1),\dotsc,(x_k,y_k)$ from
$\{1,\dotsc,n\}$ uniformly at
random, 
and consider the permutation 
\be
\sigma = \tau_{k} \circ \dots \circ \tau_{1}.  
\ee 
Here,
$\tau_{i}=(x_i,y_i)$ denotes the transposition of $x_i$ and $y_i$.
The cycle structure (i.e.\ the lengths of the permutation
cycles) of $\sigma$ gives an integer partition of 
$n$; dividing by
$n$ gives a partition of $[0,1]$.

Schramm~\cite{Sch} studied this model in the case where 
$k = \lfloor cn\rfloor$ with $c>1$. He proved that, with
high probability as $n\to\infty$, there are cycles whose lengths are
of order $n$. Let $L_i$ denote the length of the $i$th largest
cycle. The sum of cycles of length of order $n$ is $\kappa n (1+o(1))$
with $\kappa = \kappa(c)$ fixed  (and $\kappa\to1$ when $c\to\infty$);
 and the sequence $(\frac{L_1}{\kappa
n}, \frac{L_2}{\kappa n}, \dots)$ converges (weakly) to
$\mathtt{PD}(1)$, the Poisson--Dirichlet distribution with parameter
1. 

One motivation for the random interchange model,
pointed out and exploited by T\'oth \cite{Toth}, is that it provides a
probabilistic representation of the Heisenberg model of quantum
spins. 
For this representation the density of the random interchange model 
gets an extra weight $2^{\#{\rm cycles}}$, which leads to a conjectured limit 
which is the Poisson--Dirichlet distribution $\mathtt{PD}(2)$, see
\cite{GUW}. 
In this case the number of transpositions $k$ 
is random, chosen to be
$\mathtt{Poisson}(cn)$. 
Recently, it was proved in \cite{BFU}
that, in the model with weight $\theta^{\#{\rm cycles}}$,
$\theta=2,3,4,\dots$, we have
\be
\label{result BFU}
\lim_{n\to\infty} \bbE_n \Bigl[ \prod_{i\geq1} \tfrac1\theta (\e{h L_i/n} + \theta - 1) \Bigr] = \e{\frac h\theta (1-\kappa)} \bbE_{\mathtt{PD}(\theta)} \Bigl[ \prod_{i\geq1} \tfrac1\theta (\e{h\kappa X_i} + \theta - 1) \Bigr],
\ee
for some (deterministic) $\kappa \in [0,1]$ 
which depends on $c$ and $\theta$ 
and is positive for $c$ large enough;  
the above identity holds for all $h \in
\bbC$.  
The last
expectation in \eqref{result BFU} is equal to the moment generating
function  at~$h\kappa$ of
the Bernoulli convolution of $\mathtt{PD}(\theta)$ with parameter
$p=\nicefrac1\theta$. 
The interpretation is that the system displays small
(order 1) and large (order~$n$) cycles, and that the joint
distribution of the lengths of large cycles is
$\mathtt{PD}(\theta)$;   see \cite{BFU} for more details. 
But is Eq.~\eqref{result BFU} enough to guarantee 
that the limiting sequence of renormalised cycle lengths 
be equal to $\mathtt{PD}(\theta)$?
We prove here that, among the 
residual allocation
distributions, the answer is yes for $\theta=3,4,\dots$, but no for~$\theta=2$.

There are related loop models that include `double bars' as well as
the transposition `crosses', that represent further quantum spin
systems \cite{AN,Uel}. Without weights, it was proved in \cite{BKLM}
that the joint distribution of the lengths of long loops is
$\mathtt{PD}(\nicefrac12)$. With weights $2^{\#{\rm loops}}$, the
result of \cite{BFU} is that 
\be \lim_{n\to\infty} \bbE_n \Bigl[
\prod_{i\geq1} \cos (h L_i/n) \Bigr] = \bbE_{\mathtt{PD}(1)} \Bigl[
\prod_{i\geq1} \cos(h\kappa X_i) \Bigr], 
\ee 
for all $h \in \bbC$. The
latter expectation is closely related to the moment generating
function of the
Bernoulli convolution of $\mathtt{PD}(1)$ with parameter $p=\nicefrac12$.
Results of the present article show that the above claim is not enough
to guarantee that the limiting distribution is $\mathtt{PD}(1)$, 
even if one assumes that the limiting distribution is a residual allocation.

\subsection{Exchangeable divide-and-color models}
\label{sec S T}
In a recent paper by Steif and Tykesson \cite{ST}, the authors
introduce  \emph{generalized divide-and-color models} as follows.
Given a countable set $S$ and $p\in(0,1)$, one starts by forming a random
partition $\Pi$ of $S$ according to some
rule;  one then assigns to 
each part of $\Pi$ a `color' 0 or 1, independently and
with probability $p$ for 1.
Letting each element of $S$ take the color of the part it belongs
to and then 
forgetting about the original parition $\Pi$, one ends up with a
random element $\om\in\{0,1\}^S$.  This construction is motivated by
the Fortuin--Kasteleyn representation of the Ising model, among other
examples.  

A particular case is when
$S=\bbN$ and when the random partition $\Pi$ 
is \emph{exchangeable}, i.e.\ its distribution is invariant under all
finite permutations of~$\bbN$.  
By Kingman's famous theorem \cite{Kin2}, 
such a random partition of $\bbN$ is uniquely
encoded by a random vector $(X_i)_{i\geq1}$ satisfying 
$X_i\geq X_{i+1}\geq0$ for all $i\geq1$
and \smash{$\sum_{i\geq1} X_i\leq 1$};  note that $<1$ is allowed in this case.
On the other hand, the resulting color process
$\om\in\{0,1\}^\bbN$ is also exchangeable;  by de Finetti's theorem,
this means that there is some random variable $\xi\in[0,1]$ such that,
conditional on $\xi$, the $\om_i$ are i.i.d.\
$\mathtt{Bernoulli(\xi)}$.
It is not hard to see that
(when $\sum_{i\geq1}X_i=1$)
$\xi$ equals the Bernoulli convolution of
$(X_i)_{i\geq1}$, see \cite[Lemma 3.12]{ST}.
Steif and Tykesson ask whether the law of the random partition $\Pi$
can be recovered from the law of $\om$ when $p\neq \nicefrac12$.  
This is equivalent to asking
whether the law of $(X_i)_{i\geq1}$ can be recovered from 
the law of its Bernoulli convolution.
Our results on residual
allocation models show that the answer can be \emph{yes} under
additional assumptions on $(X_i)_{i\geq1}$.

\subsection{Framework and results}
\label{sec setting}

We define a Bernoulli convolution as follows.

\begin{definition}
Let $(X_i)_{i\geq 1}$ be a random partition of $[0,1]$,
i.e.\ $X_i\geq 0$ for all $i\geq 1$ and $\sum_{i\geq 1}X_i = 1$.
Let $(\eps_i)_{i\geq1}$  be a sequence of i.i.d.\ Bernoulli
random variables of parameter $p \in (0,1)$, independent of 
$(X_i)_{i\geq1}$. Set
\be
\label{def Z}
Z = \sum_{i\geq1} \eps_i X_i.
\ee 
The law of $Z$, and sometimes the random variable $Z$ itself, is
called the {\it Bernoulli($p$) convolution} of the random 
partition $(X_i)_{i\geq1}$. 
\end{definition}

We restrict our setting to random partitions obtained from {\it
residual allocation}. Namely, we consider the interval $[0,1]$ with
the Borel $\sigma$-algebra. Given a probability measure $\mu$ on
$[0,1]$, let
$(Y_i)_{i\geq1}$ be i.i.d.\ random variables distributed according to
$\mu$, and consider the sequence $(X_i)_{i\geq1}$ defined by
\be
\label{def RA}
\begin{split}
&X_1 = Y_1, \\
&X_2 = (1-Y_1) Y_2, \\
&X_3 = (1-Y_1) (1-Y_2) Y_3, \\
&\text{etc...}
\end{split}
\ee
Assuming that $\mu(\{0\}) < 1$, it is not hard to prove that 
$X_i \to 0$ as $i \to \infty$ and that $\sum_{i\geq1} X_i = 1$, almost
surely. It is possible to rearrange the sequence $(X_i)_{i\geq 1}$ in decreasing
order if one wants an ordered partition, but this is not necessary
here.

An important example of this construction is the Griffiths, 
Engen and McCloskey distribution,
$\mathtt{GEM}(\theta)$, obtained when $\mu=\mathtt{Beta}(1,\theta)$.
If one orders the entries of a $\mathtt{GEM}(\theta)$ sample by
decreasing size, one obtains the famous Poisson--Dirichlet
distribution $\mathtt{PD}(\theta)$, see \cite{Kin}.  
Another important example is the `classical' Bernoulli 
convolution $\sum_{i\geq1} \pm \lambda^i$ with i.i.d.\ random signs; see the review~\cite{PSS}.
This falls into our framework 
(take $\mu=\delta_{1-\lambda}$ for some fixed $\lambda\in(0,1)$ so that
$X_i=(1-\lambda)\lambda^{i-1}$), 
except that our Bernoulli coefficients take value in $\{0,1\}$ instead
of $\{-1, 1\}$.

As a shorthand, since we only consider random partitions from
residual allocation, we will sometimes refer to $Z$ (or its law) as the
Bernoulli convolution of the measure $\mu$.
The Bernoulli convolution 
is invariant under rearrangements of
the sequence $(X_i)_{i\geq 1}$. The cases $p=0$ and $p=1$ are trivial and
uninteresting, since $Z=0$ and $Z=1$, respectively.

If $\mu$ has an atom at 0 of value $c>0$, i.e.\ $\mu(\{0\}) = c$, then
the sequence $(Y_1,Y_2,\dots)$ --- and therefore $(X_1,X_2,\dots)$ ---
contains a density $c$ of elements that are equal to 0; this does not
affect $Z$. In other words, the Bernoulli convolutions of 
$\mu$ and 
$c \delta_0 + (1-c) \mu$ are the same for all $c \in [0,1)$. We avoid this
trivial degeneracy by restricting our attention to measures that do
not have an atom at 0.

Given $p \in (0,1)$, the question is whether the Bernoulli($p$) convolution
characterises the random partition obtained from
residual allocation. We show that it is the case for 
$p \neq \nicefrac12$.

\begin{theorem}
\label{thm unique}
Let $p \in (0,1) \setminus \{ \nicefrac12 \}$. If $\mu$ and $\nu$ are two
probability measures on $[0,1]$ such that 
$\mu(\{0\}) = \nu(\{0\}) =0$, 
and the corresponding residual allocation models 
have identical Bernoulli($p$) convolution,
then $\mu = \nu$.
\end{theorem}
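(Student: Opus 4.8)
The plan is to exploit a self-similarity (distributional fixed-point) equation satisfied by the Bernoulli convolution of a residual allocation model. Writing $Z=Z(\mu)$ for the Bernoulli($p$) convolution of $\mu$, one has the decomposition $Z \eqd \eps_1 Y_1 + (1-Y_1) \widetilde Z$, where $Y_1\sim\mu$, $\eps_1\sim\mathtt{Bernoulli}(p)$ and $\widetilde Z$ is an independent copy of $Z$. This is immediate from the definition \eqref{def RA} and \eqref{def Z}. The strategy is to turn this into a functional equation for a suitable transform of the law of $Z$ and show that, for $p\neq\nicefrac12$, it determines $\mu$ uniquely.

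First I would pass to the moment generating function or characteristic function. Let $\phi_Z(t)=\bbE[\e{\ii t Z}]$ and let $F(t)=\bbE_\mu[\e{\ii t Y}]$ be the characteristic function of $\mu$; actually it is cleaner to introduce $G(t)=\bbE_\mu\bigl[(p\e{\ii t}+1-p)\e{\ii t\,(\cdot)}\bigr]$ evaluated appropriately, but let me proceed concretely. Conditioning on $Y_1=y$ and using independence, the fixed-point relation gives
\be
\label{eq:fp}
\phi_Z(t)=\int_{[0,1]}\bigl(p\,\e{\ii t y}+(1-p)\bigr)\,\phi_Z\bigl(t(1-y)\bigr)\,\mu(\dd y).
\ee
Now suppose $\mu$ and $\nu$ both yield the same law of $Z$, hence the same $\phi_Z$. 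Subtracting the two instances of \eqref{eq:fp} yields
\be
\label{eq:diff}
0=\int_{[0,1]}\bigl(p\,\e{\ii t y}+(1-p)\bigr)\,\phi_Z\bigl(t(1-y)\bigr)\,(\mu-\nu)(\dd y).
\ee
The aim is to deduce $\mu=\nu$ from \eqref{eq:diff}. The natural route is a Taylor expansion at $t=0$: since $\mu$ and $\nu$ have no atom at $0$ and are supported in $[0,1]$, all moments exist, and one can try to prove by induction on $n$ that the $n$th moments of $\mu$ and $\nu$ agree. Matching the coefficient of $t^n$ in \eqref{eq:diff} gives a linear relation in which the $n$th moment difference $m_n(\mu)-m_n(\nu)$ appears multiplied by a factor coming from the known lower moments, plus a term involving lower-order moment differences which vanish by the inductive hypothesis. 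I expect the surviving scalar factor to be proportional to $p - 2^{-?}$ type quantity; more precisely one should find a coefficient of the form $p\cdot\text{(something)} - \bbE[(1-Y)^n]\cdot(\cdots)$, and crucially this factor is nonzero precisely when $p\neq\nicefrac12$. Once all moments of $\mu$ and $\nu$ coincide, since both are probability measures on the bounded interval $[0,1]$, the Hausdorff moment problem is determinate and $\mu=\nu$ follows.

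The main obstacle is the bookkeeping in \eqref{eq:diff}: because $\phi_Z$ itself depends on $\mu$ (equivalently, the moments of $Z$ are determined by the common data and can be treated as known), one must carefully separate, in the coefficient of $t^n$, the genuinely new term $m_n(\mu)-m_n(\nu)$ from the contributions of lower moments. I would organise this by first noting that the moments $\mathbb E[Z^k]$ are the same for both models, then extracting from \eqref{eq:fp} a recursion expressing $\mathbb E[Z^n]$ in terms of $m_1(\mu),\dots,m_n(\mu)$ and lower $Z$-moments, and checking that the coefficient of $m_n(\mu)$ in that recursion is exactly $1-\bbE[(1-Y)^n]\ne 0$ for the $Z^n$ moment, while the extra $p$-dependent piece from the $\e{\ii t y}$ factor contributes the discriminating coefficient. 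The case $p=\nicefrac12$ is exactly where this coefficient degenerates, consistent with the counterexample promised later in the paper. A secondary technical point is justifying the termwise expansion, but this is routine since everything is analytic in $t$ on all of $\bbC$ (the variables are bounded), so no convergence subtleties arise.
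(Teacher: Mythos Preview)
Your overall strategy---use the distributional fixed point $Z\eqd\eps Y+(1-Y)Z'$, expand in moments, and invert a triangular linear system---is exactly the route the paper takes. But two genuine gaps remain.

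First, the heart of the argument is the claim that the ``surviving scalar factor'' multiplying the top-order moment difference is nonzero when $p\neq\nicefrac12$. If you carry out your expansion carefully, that factor is
\[
(-1)^n(1-p)\,\bbE[Z^n]+p\,\bbE[(1-Z)^n],
\]
which is positive for even $n$ but not obviously nonzero for odd $n\geq3$. The paper proves this (Lemma~\ref{lem P-coeff}) by writing $\bbE[Z^n]=\sum_{\ell=1}^n p^\ell\,\bbE[S_{n,\ell}]$ with $S_{n,\ell}=\sum X_{i_1}\dotsb X_{i_n}$ summed over index tuples with exactly $\ell$ distinct values; the symmetric expression for $\bbE[(1-Z)^n]$ then yields $p(1-p)\sum_\ell(p^{\ell-1}-(1-p)^{\ell-1})\bbE[S_{n,\ell}]$, whose terms all have the same sign. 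Your guesses at the form of this factor (``$p-2^{-?}$'' or ``$1-\bbE[(1-Y)^n]$'') are not correct, and this step needs a real argument.

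Second, your induction cannot start: at $n=1$ the coefficient above vanishes identically (it equals $-(1-p)p+p(1-p)=0$), so the system determines $m_n(\mu)-m_n(\nu)$ for $n\geq2$ only as a multiple of $m_1(\mu)-m_1(\nu)$, which is left free. This is precisely where the hypothesis $\mu(\{0\})=\nu(\{0\})=0$ enters---not, as you write, for existence of moments (moments of $[0,1]$-valued variables always exist). The paper handles this by working with the normalised quantities $b_n=(1-\bbE[(1-Y)^n])/\bbE[Y]$, which satisfy $b_1=1$ and are then uniquely determined by the $Z$-moments via the recursion; since $(1-Y)^n\to\bbone_{\{Y=0\}}=0$ a.s., one has $b_n\to1/\bbE[Y]$, which recovers $\bbE[Y]$ and hence all moments of $Y$.
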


We also show that Theorem \ref{thm unique} fails for
$p=\nicefrac12$. Our non-uniqueness results hold for $\mathtt{GEM}$
(or Poisson--Dirichlet) measures of arbitrary parameters.

\begin{theorem}
\label{thm non-unique} 
Let $\theta>0$ and $\mu = \mathtt{Beta}(1,\theta)$. 
Then there exist
infinitely many
 $\nu \neq \mu$ such that
$\nu(\{0\}) = 0$, and such that 
$\mu$ and $\nu$ have identical
Bernoulli($\nicefrac12$) convolutions.
\end{theorem}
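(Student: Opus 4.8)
\textit{Overall strategy.} I would combine a distributional fixed-point equation for the Bernoulli convolution, a soft convexity argument that promotes a single counterexample into infinitely many, and --- exploiting that $p=\nicefrac12$ is self-symmetric --- an explicit perturbation of $\mathtt{Beta}(1,\theta)$. First I would identify the Bernoulli($\nicefrac12$) convolution of $\mu=\mathtt{Beta}(1,\theta)$. Since the Bernoulli convolution is rearrangement-invariant and the decreasing rearrangement of $\mathtt{GEM}(\theta)$ is $\mathtt{PD}(\theta)$, use the Gamma-subordinator representation: if $(J_i)_{i\ge1}$ are the atoms of a Poisson process on $(0,\infty)$ of intensity $\theta x^{-1}\e{-x}\dd x$, then $T=\sum_iJ_i\sim\mathtt{Gamma}(\theta)$ is independent of $(J_i/T)_i\sim\mathtt{PD}(\theta)$; thinning by the colours $\eps_i$ splits this into two independent $\mathtt{Gamma}(\tfrac\theta2)$-subordinators $A=\sum_i\eps_iJ_i$, $B=\sum_i(1-\eps_i)J_i$ with $A+B=T$, so $Z=A/(A+B)\sim\mathtt{Beta}(\tfrac\theta2,\tfrac\theta2)=:W$. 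Next, unfolding the first stick in \eqref{def RA} shows that the Bernoulli($p$) convolution $Z_\nu$ of any $\nu$ with $\nu(\{0\})=0$ satisfies $Z_\nu\eqd\eps Y+(1-Y)Z_\nu'$ with $Y\sim\nu$, $\eps\sim\mathtt{Bernoulli}(p)$ and $Z_\nu'\eqd Z_\nu$ independent, and the map $\Phi_\nu\colon\rho\mapsto\mathrm{law}\bigl(\eps Y+(1-Y)T\bigr)$ (with $T\sim\rho$) is a contraction of $\mathcal P([0,1])$ in the Wasserstein metric with ratio $\bbE_\nu[1-Y]<1$; hence $Z_\nu$ is its unique fixed point.

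The key soft observation is that $\nu\mapsto\Phi_\nu$ is affine, so the set $\mathcal S$ of probability measures $\nu$ on $[0,1]$ with $\nu(\{0\})=0$ and Bernoulli($\nicefrac12$) convolution $W$ is convex: if $W$ is a fixed point of $\Phi_{\nu_0}$ and of $\Phi_{\nu_1}$, it is a fixed point of $\Phi_{q\nu_0+(1-q)\nu_1}$, hence equal to that measure's Bernoulli convolution by uniqueness. Since $\mu\in\mathcal S$, it suffices to produce one $\nu_1\in\mathcal S\setminus\{\mu\}$: then $\{q\mu+(1-q)\nu_1:q\in[0,1)\}$ is an infinite family in $\mathcal S$ avoiding $\mu$, which is the theorem.

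To build $\nu_1$ I would use $p=\nicefrac12$. Because $\eps\eqd1-\eps$ and $W\eqd1-W$, ``$W$ is a fixed point of $\Phi_{\nu_1}$'' is equivalent to $\tfrac12\mathrm{law}(V)+\tfrac12\mathrm{law}(1-V)=W$, where $V=(1-Y_1)W'$ with $Y_1\sim\nu_1$ and $W'\sim W$ independent; taking $\nu_1$ absolutely continuous (then so is $V$), and writing $g$ for the $\mathtt{Beta}(\tfrac\theta2,\tfrac\theta2)$ density, this reads $f(x)+f(1-x)=2g(x)$ for the density $f$ of $V$, i.e.\ $f=g+h$ with $h$ antisymmetric about $\nicefrac12$. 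The measure $\mu$ gives $1-Y\sim\mathtt{Beta}(\theta,1)$, hence $V\sim\mathtt{Beta}(\tfrac\theta2,\tfrac\theta2+1)$ and $h(x)=(1-2x)g(x)$. For a second solution, perturb $\mathtt{Beta}(\theta,1)$ to $\mathtt{Beta}(\theta,1)+t\eta$, where $\eta$ is a bounded signed density with $\int\eta=0$, supported in a compact subinterval of $(0,1)$, chosen so that the density $\mathcal M\eta$ of $\eta\cdot W'$ is antisymmetric about $\nicefrac12$; such $\eta$ exist since the ``multiply by $W'$'' operator $\mathcal M$ is injective and, after an explicit power-weight substitution, is a Riemann--Liouville fractional integral of order $\tfrac\theta2$ that can be inverted on a prescribed smooth antisymmetric function. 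For $\theta=2$ this is completely explicit: $W'$ is uniform, $\mathcal M\eta(v)=\int_v^1\eta(r)\,r^{-1}\dd r$, and $\mathcal M\eta(v)=\sin(2\pi v)$ forces $\eta(r)=-2\pi r\cos(2\pi r)$, so that $\nu_1$ has density proportional to $(1-y)\bigl(1-\pi t\cos(2\pi y)\bigr)$ for small $t>0$, which is $\neq\mu$.

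The hard part is the last step for general $\theta$: checking that for $t$ small $\mathtt{Beta}(\theta,1)+t\eta$ really is a probability density --- positivity near $0$ and $1$ is delicate because $\mathtt{Beta}(\theta,1)$ degenerates at those points when $\theta\ne1$ --- and that the fractional inversion produces an $\eta$ with the desired support and antisymmetry for every $\theta>0$. The subordinator identity, the contraction estimate and the affineness of $\nu\mapsto\Phi_\nu$ are all routine.
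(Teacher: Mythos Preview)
Your approach shares the paper's skeleton: identify the Bernoulli($\nicefrac12$) convolution of $\mathtt{GEM}(\theta)$ as $\mathtt{Beta}(\tfrac\theta2,\tfrac\theta2)$ via the Gamma subordinator, reduce the fixed-point condition to a linear equation involving a Riemann--Liouville fractional integral of order $\tfrac\theta2$, and construct alternatives by inverting that operator on a small antisymmetric perturbation. The packaging differs in two places. You invoke a Wasserstein contraction to pin down the Bernoulli convolution as the \emph{unique} fixed point of $\Phi_\nu$, while the paper gets this from the equivalence (a)$\Leftrightarrow$(b) of Lemma~\ref{lem stoch id} directly. More interestingly, you observe that $\nu\mapsto\Phi_\nu(\rho)$ is affine, so the set $\mathcal S$ of measures with the prescribed convolution is convex, reducing ``infinitely many'' to ``one more''; the paper instead produces a one-parameter family outright by scaling its antisymmetric perturbation $\eps$. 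Both routes land on the same hard analytic step --- positivity of the perturbed density --- which you rightly flag and which the paper handles by showing that the perturbation to $\varphi(x)=\rho(x)/(1-x)^{\theta-1}$ stays uniformly bounded, so that $\varphi\geq0$ follows from $\varphi_0\equiv\theta$.

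One inconsistency to clean up: you ask that $\eta$ be supported in a compact subinterval of $(0,1)$, but your own $\theta=2$ example $\eta(r)=-2\pi r\cos(2\pi r)$ is not, and for $\theta\notin 2\bbN$ the fractional inversion of a prescribed smooth antisymmetric target will in general \emph{not} yield a compactly supported $\eta$ (the right-sided fractional integral of a compactly supported $\psi$ does not vanish to the left of the support). This does not break your argument --- compact support was only a device to secure positivity of $\theta r^{\theta-1}+t\eta(r)$ --- but it means you cannot dodge the endpoint analysis that way. The cleanest fix is to mimic the paper: pass to the weighted unknown $\varphi$ so that the unperturbed solution is the constant $\theta$, and then bound the fractional derivative of a sufficiently smooth antisymmetric target uniformly on $[0,1]$.
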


The non-uniqueness results are not explicit 
with the exception of $\mathtt{GEM}(2)$: 
We show that if an (absolutely continuous) measure
$\nu$ satisfies
\be
\label{das ist genug}
x \,\dd\nu(x) = (1-x) \,\dd\nu(1-x) \quad
\text{ for all }x \in [0,1],
\ee
then its residual allocation has the same Bernoulli convolution
as $\mu=\mathtt{Beta}(1,2)$.
Note that  \eqref{das ist genug} holds true in the case 
$\mu=\mathtt{Beta}(1,2)$, for which
$\dd\mu(x) = 2(1-x) \dd x$.
Another example is the Dirac measure at $x=\nicefrac12$, $\nu = \delta_{\nicefrac12}$, which formally satisfies \eqref{das ist genug}.
We refer to Proposition \ref{prop GEM(2)} for
details including conditions on the regularity of measures.

We prove Theorems \ref{thm unique} and \ref{thm non-unique} with the
help of a stochastic identity for the random variable~$Z$, see Lemma
\ref{lem stoch id}. This identity holds because of the self-similarity
structure of
residual allocations. The proofs of Theorem
\ref{thm unique} and \ref{thm non-unique} can be found in Sections
\ref{sec unique} and~\ref{sec non-unique}, respectively.

A natural question is whether Theorem \ref{thm unique} holds beyond
residual allocations.  Obviously, the Bernoulli convolution
\eqref{def Z} may be defined for arbitrary random partitions
$(X_i)_{i\geq1}$.  
Alexander Holroyd has given an example showing that, in general,
the Bernoulli convolution does \emph{not} determine the random
partition, even if the former is known for all $p\in(0,1)$;
we explain Holroyd's example in Section \ref{sec questions}.
One may also allow more general random variables
$(\eps_i)_{i\geq1}$;  in this generality, $Z$ is sometimes called a
\emph{random weighted average}.  Pitman's recent review \cite{Pit3}
contains a wealth of information about the theory of random weighted
averages.
In \cite[Corollary 9]{Pit3} it is shown that the
distributions of the random weighted averages $Z$, as
$(\eps_i)_{i\geq1}$ range over all i.i.d.\ sequences of random
variables with finite support, fully characterize the law of the
random partition $(X_i)_{i\geq1}$.  This holds without any assumptions
about the properties of the random partition.  
It is natural to ask whether the condition on the $\eps_i$ can
  be weakened.

\section{Uniqueness when $p\neq\nicefrac12$ (proof of Theorem~\ref{thm unique})}
\label{sec unique}

The following lemma will be used both to establish uniqueness for
$p\neq\nicefrac12$ and non-uniqueness for $p=\nicefrac12$.

\begin{lemma}\label{lem stoch id} Let $Y, Y_1, Y_2, \dots$ be i.i.d.\
random variables with values in $[0,1]$ and $(X_i)_{i\geq1}$ defined by
\eqref{def RA}; $\eps,\eps_1, \eps_2, \dots$ be i.i.d.\
$\mathtt{Bernoulli}(p)$ random variables independent of the $Y$'s; 
and $Z$ and $Z'$ be two identically distributed random variables
with values in $[0,1]$, $Z'$ being independent of $Y$ and $\eps$. 
The following stochastic identities are
equivalent:
\begin{itemize}
\item[(a)] $\displaystyle Z \eqd \sum_{i\geq1} \eps_i X_i$;
\item[(b)] $\displaystyle Z \eqd \eps Y + (1-Y) Z'$.
\end{itemize}
\end{lemma}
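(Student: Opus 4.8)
The plan is to prove the two implications separately, exploiting the self-similarity of the residual allocation construction. The key structural observation is that if $(X_i)_{i\geq 1}$ is built from $(Y_i)_{i\geq 1}$ via \eqref{def RA}, then peeling off the first coordinate gives
\be
\label{self-sim}
\sum_{i\geq 1}\eps_i X_i = \eps_1 Y_1 + (1-Y_1)\sum_{i\geq 2}\eps_i \frac{X_i}{1-Y_1},
\ee
and the rescaled tail $\bigl(\tfrac{X_2}{1-Y_1},\tfrac{X_3}{1-Y_1},\dots\bigr)$ is itself a residual allocation model driven by the i.i.d.\ sequence $(Y_2,Y_3,\dots)$, hence has the same law as $(X_i)_{i\geq 1}$ and is independent of $Y_1$ and of $\eps_1$. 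This is the engine behind both directions.

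For (a) $\Rightarrow$ (b): suppose $Z\eqd\sum_{i\geq1}\eps_i X_i$. Define $W := \sum_{i\geq 2}\eps_i \tfrac{X_i}{1-Y_1}$. By the observation above, $W$ is independent of $(Y_1,\eps_1)$ and $W\eqd\sum_{i\geq1}\eps_i X_i \eqd Z$. Plugging into \eqref{self-sim} and relabelling $Y_1\rightsquigarrow Y$, $\eps_1\rightsquigarrow\eps$, $W\rightsquigarrow Z'$ (a fresh copy of $Z$ independent of $Y,\eps$) yields exactly $Z\eqd \eps Y+(1-Y)Z'$, which is (b). For (b) $\Rightarrow$ (a): assume $Z\eqd \eps Y+(1-Y)Z'$ with $Z'$ an independent copy of $Z$. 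The idea is to iterate this identity. Writing $Z' \eqd \eps' Y' + (1-Y')Z''$ with fresh independent copies and substituting, after $n$ steps one obtains
\be
\label{iterate}
Z \eqd \sum_{i=1}^{n}\eps_i X_i + (1-Y_1)\cdots(1-Y_n)\,Z^{(n)},
\ee
where $(X_i)_{i\leq n}$ are the first $n$ residual allocation coordinates and $Z^{(n)}$ is an independent copy of $Z$ taking values in $[0,1]$. Since $\mu(\{0\})<1$ (indeed $\mu(\{0\})=0$ in our application, but $\mu(\{0\})<1$ suffices), the product $\prod_{i=1}^{n}(1-Y_i)\to 0$ almost surely, so the remainder term tends to $0$ in probability while the partial sum converges a.s.\ to $\sum_{i\geq1}\eps_i X_i$; passing to the limit in distribution gives (a).

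The main obstacle is the limiting argument in (b) $\Rightarrow$ (a): one must justify that \eqref{iterate} holds for every $n$ with the correct joint independence structure (the $\eps_i$, the $Y_i$, and $Z^{(n)}$ all mutually independent), and then that the distributional limit is legitimate. The independence bookkeeping is handled by a clean induction: at each stage the fresh copy $Z'$ is independent of everything used so far, so the newly introduced $(\eps_{n+1},Y_{n+1})$ inherit the required independence. The convergence is then routine: $\prod_{i\geq1}(1-Y_i)=0$ a.s.\ because $\sum_{i\geq1}\log\tfrac{1}{1-Y_i}=\infty$ a.s.\ (the summands are i.i.d.\ positive with positive mean, finite or not), hence the error term in \eqref{iterate} vanishes, and since all quantities live in the compact space $[0,1]$ there are no integrability issues in taking the limit. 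One should also note that the identity in (a) implicitly asserts $\sum_{i\geq1}\eps_i X_i$ is well-defined with values in $[0,1]$, which is immediate since $\sum_{i\geq1}X_i=1$ almost surely.
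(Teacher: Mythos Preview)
Your proof is correct and follows essentially the same route as the paper's: both directions exploit the self-similarity \eqref{self-sim}, with (a)$\Rightarrow$(b) coming from a single peel-off and (b)$\Rightarrow$(a) from iterating the fixed-point identity and letting the remainder $(1-Y_1)\cdots(1-Y_n)Z^{(n)}$ vanish. Your write-up is in fact slightly more careful than the paper's in two places --- you make explicit the hypothesis $\mu(\{0\})<1$ needed for $\prod_i(1-Y_i)\to0$, and you spell out the independence bookkeeping in the iteration --- but the argument is the same.
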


This is not new, see \cite[Theorem 1]{FT} or \cite[Theorem 7.1]{DF}; it is also discussed in \cite[(119)]{Pit3}.

\begin{proof}
Assuming $(a)$, we have
\[
Z \eqd \eps_1 Y_1 + (1-Y_1) \sum_{i\geq2} \eps_i \frac{X_i}{1-Y_1},
\]
where the sequence $(X_i/(1-Y_1))_{i\geq 2}$ is independent of 
$X_1= Y_1$ and has the same distribution as $(X_i)_{i\geq 1}$, 
which gives (b).

Assuming $(b)$, we construct a sequence of random
variables which all have the same distribution as $Z$ and which
converge weakly (in fact, almost surely) 
to $\sum_{i\geq1} \eps_i X_i$.  Observe that there exist
$Z_1$ and $Z_2$ two independent copies of $Z$, 
independent of $\eps_i$ and $Y_i$ such that
\be
\begin{split}
Z_1 &\eqd \eps_1 Y_1 + (1-Y_1) Z_1 \\
&\eqd \eps_1 Y_1 + (1-Y_1) \bigl[ \eps_2 Y_2 + (1-Y_2) Z_2 \bigr].
\end{split}
\ee
Iterating this further, we get $(Z_i)_{i\ge 1}$ such that for all $n\geq1$, 
\be
\sum_{i=1}^n \eps_i X_i + (1-Y_1) \dotsb (1-Y_n) Z_n \eqd Z_1,
\ee
where  $X_i$ are as defined in \eqref{def RA}.
All terms in $\sum_{i=1}^n \eps_i X_i$ are positive and the sums are
bounded by 1, hence the series converges to
$\sum_{i\geq1} \eps_i X_i$;   the remainder 
$ (1-Y_1) \dotsb (1-Y_n) Z_n$ converges to 0 almost surely.
As $n\to\infty$ we obtain (a).
\end{proof}

We will show that all moments of $Y \sim \mu$ are determined by the
Bernoulli convolution $Z$ of the residual allocation model from
$\mu$. This holds for $p \in (0,1) \setminus \{\nicefrac12\}$. It does
not hold for $p=0$ (the Bernoulli convolution is always 0) and $p=1$
(it is always 1). It also does not hold for $p=\nicefrac12$, for
reasons that are not obvious and that are discussed 
 in Sect.\ \ref{sec non-unique}.

Let us introduce numbers $a_{n,k}$ and $c_n$ that depend on the law of $Z$, 
and numbers $b_n$ that depend on the law of~$Y$. 
For $n,k \in \bbN$ with $k \leq n$, let
\be
\begin{split}
&a_{n,k} = (-1)^k p \binom nk \bbE \bigl[ (1-Z)^k \bigr], \\
&c_n = (1-p) \bbE[Z^n], \\
&b_n = \frac{1 - \bbE \bigl[ (1-Y)^n \bigr]}{\bbE[Y]}.
\end{split}
\ee
Note that $b_0=0$, $b_1=1$, and $a_{1,1}+c_1=0$ 
since $\bbE[Z]=p$. We have the following relations.

\begin{proposition}
\label{prop Y and Z}
For all $p \in [0,1]$ and all $n \geq 1$, we have
\[
c_n b_n + \sum_{k=1}^n a_{n,k} b_k = 0.
\]
\end{proposition}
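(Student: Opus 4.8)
The plan is to exploit the stochastic identity of Lemma~\ref{lem stoch id}(b) through moment computations. Since $Z$ is by construction the Bernoulli($p$) convolution of the residual allocation model built from $\mu$, part~(a) of that Lemma holds, hence so does part~(b): $Z \eqd \eps Y + (1-Y) Z'$, where $\eps \sim \mathtt{Bernoulli}(p)$, $Y \sim \mu$ and $Z' \eqd Z$ are independent. Conditioning on the value of $\eps$ and using the algebraic identity $Y + (1-Y)Z' = 1 - (1-Y)(1-Z')$, valid on $\{\eps = 1\}$, I would expand the $n$-th power by the binomial theorem and use independence of $Y$ and $Z'$ (together with $Z' \eqd Z$) to obtain the master moment identity
\be
\label{eq master moment}
\bbE[Z^n]\Bigl( 1 - (1-p)\,\bbE\bigl[(1-Y)^n\bigr] \Bigr) = p \sum_{k=0}^n (-1)^k \binom nk \bbE\bigl[(1-Y)^k\bigr]\, \bbE\bigl[(1-Z)^k\bigr].
\ee

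Next I would record two elementary binomial identities, obtained by writing $\bbE[Z^n] = \bbE[(1-(1-Z))^n]$ and expanding: namely $\sum_{k=0}^n (-1)^k \binom nk \bbE[(1-Z)^k] = \bbE[Z^n]$, and, setting $S := \sum_{k=0}^n (-1)^k \binom nk \bbE[(1-Y)^k]\bbE[(1-Z)^k]$, the right-hand side of \eqref{eq master moment} equals $pS$. Since $\mu(\{0\}) = 0$ forces $Y > 0$ almost surely and hence $\bbE[Y] > 0$, the numbers $b_k$ are well defined, and I would multiply the claimed relation through by $\bbE[Y]$. Substituting the definitions of $a_{n,k}$, $b_k$ and $c_n$ and splitting $\bbE[Y]\sum_{k=1}^n a_{n,k} b_k$ into the two pieces $\sum_{k=1}^n (-1)^k p \binom nk \bbE[(1-Z)^k]$ and $- \sum_{k=1}^n (-1)^k p \binom nk \bbE[(1-Z)^k]\bbE[(1-Y)^k]$, the two binomial identities turn these into $p\bbE[Z^n] - p$ and $-pS + p$ respectively (the $k=0$ terms supplying the $\mp p$, which cancel), so $\bbE[Y]\sum_{k=1}^n a_{n,k}b_k = p\bbE[Z^n] - pS$. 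Adding $\bbE[Y]\, c_n b_n = (1-p)\bbE[Z^n]\bigl(1 - \bbE[(1-Y)^n]\bigr)$, the whole quantity $\bbE[Y]\bigl(c_nb_n + \sum_{k=1}^n a_{n,k}b_k\bigr)$ collapses to $\bbE[Z^n] - (1-p)\bbE[Z^n]\bbE[(1-Y)^n] - pS$, which vanishes by \eqref{eq master moment}. The cases $p \in \{0,1\}$ are trivial: then $Z$ is almost surely constant, so $c_n = (1-p)\bbE[Z^n] = 0$ and $a_{n,k} = 0$ for every $k \geq 1$.

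The only genuinely delicate part is the bookkeeping: keeping straight which binomial sums start at $k=0$ and which at $k=1$, and checking that the stray $k=0$ contributions cancel. There is no analytic subtlety — every sum is finite and every expectation is of a random variable bounded by $1$ — so once \eqref{eq master moment} is established the proposition reduces to a purely algebraic rearrangement.
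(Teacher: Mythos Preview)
Your proposal is correct and follows essentially the same route as the paper. Both arguments rest on exactly the same two ingredients --- the moment form of the stochastic identity $Z\eqd \eps Y+(1-Y)Z'$ from Lemma~\ref{lem stoch id}(b), together with the binomial expansion $\bbE[Z^n]=\sum_{k=0}^n(-1)^k\binom nk \bbE[(1-Z)^k]$ --- and differ only cosmetically: the paper subtracts the two expansions of $\bbE[Z^n]$ and then divides by $\bbE[Y]$, whereas you first isolate the ``master moment identity'' and then verify the claim by multiplying through by $\bbE[Y]$ and tracking the $k=0$ terms. Your separate treatment of $p\in\{0,1\}$ is unnecessary (the algebra already covers it) but harmless.
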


\begin{proof}
We expand $\bbE[Z^n]$ in two different ways. First,
\be\begin{split}
\bbE[Z^n] &= 
(1-p) \bbE[Z^n]+
p \bbE \bigl[ \bigl( 1 - (1-Z) \bigr)^n \bigr] \\
&= (1-p) \bbE[Z^n]+
p \sum_{k=0}^n (-1)^k \binom nk \bbE \bigl[ (1-Z)^k \bigr].
\end{split}\ee
Second, using Lemma \ref{lem stoch id},
\be\begin{split}
\EE [Z^n]
&=\EE \bigl[ \big(\eps Y+(1-Y)Z\big)^n \bigr]
=p \EE \bigl[ \big(Y+(1-Y)Z\big)^n \bigr]
+(1-p) \EE \bigl[ \big((1-Y)Z\big)^n \bigr] \\
&= (1-p) \EE \bigl[ (1-Y)^n \bigr] \EE [Z^n] +
p \EE \bigl[ \big(1-(1-Y)(1-Z)\big)^n \bigr] \\
&=(1-p) \EE \bigl[ (1-Y)^n \bigr] \EE [Z^n] +
p\sum_{k=0}^n (-1)^k \binom{n}{k} \EE \bigl[ (1-Y)^k \bigr] \EE \bigl[ (1-Z)^k \bigr].
\end{split}\ee
Equating these identities, we get
\be
0 = (1-p) \bbE[Z^n] \bigl\{ 1 - \bbE \bigl[ (1-Y)^n \bigr] \bigr\} + p \sum_{k=0}^n (-1)^k \binom nk \bbE \bigl[ (1-Z)^k \bigr]  \bigl\{ 1 - \bbE \bigl[ (1-Y)^k \bigr] \bigr\}.
\ee
We now divide by $\bbE[Y]$ and we obtain the claim of the proposition.
\end{proof}

The next lemma holds for $p \neq \nicefrac12$ only.

\begin{lemma}\label{lem P-coeff}
For $p \in (0,1) \setminus \{\nicefrac12\}$, we have for all $n\geq2$ that
\[
a_{n,n} + c_n \neq 0.
\] 
\end{lemma}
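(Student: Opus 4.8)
The plan is to argue separately according to the parity of $n$; the even case is essentially free, and the odd case is where the hypothesis $p\neq\nicefrac12$ is actually needed. For $n$ even one simply observes that $a_{n,n}+c_n=p\,\bbE[(1-Z)^n]+(1-p)\,\bbE[Z^n]$ is a sum of two nonnegative terms with strictly positive weights, while $\bbE[Z]=p>0$ forces $\bbP(Z>0)>0$ and hence $\bbE[Z^n]>0$; so $a_{n,n}+c_n>0$ for every $p\in(0,1)$.

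So assume $n$ is odd, so that $a_{n,n}=(-1)^np\,\bbE[(1-Z)^n]=-p\,\bbE[(1-Z)^n]$ and we must rule out $(1-p)\bbE[Z^n]=p\,\bbE[(1-Z)^n]$. The first step I would take is to identify $1-Z$: since $\sum_{i\geq1}X_i=1$ almost surely, $1-Z=\sum_{i\geq1}(1-\eps_i)X_i$, and since the $1-\eps_i$ are i.i.d.\ $\mathtt{Bernoulli}(1-p)$ and independent of $(X_i)_{i\geq1}$, this exhibits $1-Z$ as the Bernoulli($1-p$) convolution of the \emph{same} residual allocation coming from $\mu$. Writing $M_n(q)$ for the $n$-th moment of the Bernoulli($q$) convolution of $\mu$, we get $\bbE[(1-Z)^n]=M_n(1-p)$ and hence
\[
a_{n,n}+c_n=(1-p)M_n(p)-p\,M_n(1-p).
\]
Then I would expand $M_n(q)=\bbE\bigl[(\sum_i\eps_iX_i)^n\bigr]$ over $n$-tuples of indices: since the $\eps_i$ are $\{0,1\}$-valued and i.i.d.\ $\mathtt{Bernoulli}(q)$ we have $\bbE[\eps_{i_1}\cdots\eps_{i_n}]=q^{|\{i_1,\dots,i_n\}|}$, and grouping terms by the number $j$ of distinct indices gives $M_n(q)=\sum_{j=1}^{n}a_jq^j$ with each $a_j\geq0$ (a sum of expectations of products of the nonnegative $X_i$) and $a_1=\bbE[\sum_{i\geq1}X_i^n]>0$. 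Substituting, and using $(1-p)p^j-p(1-p)^j=p(1-p)(p^{j-1}-(1-p)^{j-1})$ so that the $j=1$ term cancels, one arrives at
\[
a_{n,n}+c_n=p(1-p)\sum_{j=2}^{n}a_j\bigl(p^{j-1}-(1-p)^{j-1}\bigr).
\]

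The point is then a sign argument: for $p\neq\nicefrac12$ and $j\geq2$ the factor $p^{j-1}-(1-p)^{j-1}$ is nonzero and has the same sign as $p-\nicefrac12$, so all summands share that sign, and since the $a_j$ are nonnegative the sum can vanish only if $a_j=0$ for all $j\geq2$, i.e.\ only if $M_n$ is linear. I would dispose of that last case directly: $a_2=0$ forces in particular $\bbE[X_1X_2^{n-1}]=\bbE[Y(1-Y)^{n-1}]\,\bbE[Y^{n-1}]=0$, and as $\mu(\{0\})=0$ gives $\bbE[Y^{n-1}]>0$ we get $\bbE[Y(1-Y)^{n-1}]=0$, i.e.\ $Y\in\{0,1\}$ almost surely, i.e.\ $\mu=\delta_1$. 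I expect this degenerate configuration $\mu=\delta_1$ (equivalently, $Z$ supported on $\{0,1\}$) to be the only real obstacle: there $a_{n,n}+c_n$ genuinely vanishes at every odd $n$, so it must be excluded from the hypotheses or dealt with by a separate argument wherever the lemma is applied.
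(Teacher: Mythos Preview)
Your approach is essentially identical to the paper's: the same parity split, the same expansion of $\bbE[Z^n]$ and $\bbE[(1-Z)^n]$ by grouping tuples $(i_1,\dots,i_n)$ according to the number $\ell$ of distinct indices, and the same sign argument on the resulting expression $p(1-p)\sum_{\ell}\bbE[S_{n,\ell}]\bigl(p^{\ell-1}-(1-p)^{\ell-1}\bigr)$. Your $a_j$ is exactly the paper's $\bbE[S_{n,j}]$.

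The one substantive difference is that the paper simply asserts $\bbE[S_{n,\ell}]>0$ for all $\ell\geq1$ and concludes, whereas you allow the coefficients $a_j$ for $j\geq2$ to vanish and trace that possibility back to $\mu=\delta_1$. Your caution is warranted: when $\mu=\delta_1$ one has $X_1=1$, $X_i=0$ for $i\geq2$, hence $Z\sim\mathtt{Bernoulli}(p)$, and then indeed $a_{n,n}+c_n=(1-p)p-p(1-p)=0$ for every odd $n$. So the lemma as stated does fail in that degenerate case, and the paper's assertion $\bbE[S_{n,\ell}]>0$ tacitly excludes it. For the application to Theorem~\ref{thm unique} this is harmless, since $\mu=\delta_1$ can be handled directly (the Bernoulli convolution is then supported on $\{0,1\}$, which forces the partition to consist of a single element equal to $1$, hence $\nu=\delta_1$ as well). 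Your write-up is therefore slightly more careful than the paper's on this point.
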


\begin{proof}
We have
\be
a_{n,n} + c_n = (1-p)\EE [Z^n] +(-1)^np\EE \bigl[ (1-Z)^n \bigr].
\ee
This is always positive for $n$ even; we thus assume from now on that~$n\geq3$ is odd.  From the definitions \eqref{def RA} and \eqref{def Z}, we have
\be
\mathbb E [Z^n]=\sum_{i_1,i_2,\dotsc,i_n\geq 1}
\mathbb E\big[\eps_{i_1}\eps_{i_2}\dotsb \eps_{i_n}\big]
\mathbb E\big[X_{i_1}X_{i_2}\dotsb X_{i_n}\big].
\ee
Note that, if $\ell=\#\{i_1,\dotsc,i_n\}$ denotes
the number distinct indices among $i_1,\dotsc, i_n\geq 1$, then
\be
\mathbb E\big[\eps_{i_1}\eps_{i_2}\dotsb \eps_{i_n}\big]
= p^\ell,
\ee
since $\eps_i^k=\eps_i$ for all $k, i\geq1$.
We thus get
\be
\EE [Z^n]=\sum_{\ell=1}^n p^\ell \EE [S_{n,\ell}],
\ee
where
$S_{n,\ell}=\sum X_{i_1}X_{i_2}\dotsb X_{i_n}$
summed over all choices of indices $i_1,\dotsc, i_n\geq 1$ such that 
$\#\{i_1,\dotsc,i_n\}=\ell$.
Note that $\EE [S_{n,\ell}]>0$ for all $\ell\geq 1$.
Since, by definition, $1-Z=\sum_{i\geq1} (1-\eps_i)X_i$ we also have
$\EE [(1-Z)^n] = \sum_{\ell=1}^n (1-p)^\ell \EE [S_{n,\ell}]$, and thus
\be
a_{n,n} + c_n = p(1-p) \, \EE [S_{n,\ell}] \, \sum_{\ell= 1}^n \bigl( p^{\ell-1}-(1-p)^{\ell-1} \bigr).
\ee
While the term $\ell=1$ is zero, all other terms are non-zero and
have the same sign, which proves the claim since $n>1$.
\end{proof}

We now turn to the proof of Theorem \ref{thm unique}.

\begin{proof}[Proof of Theorem \ref{thm unique}]
It follows from Proposition \ref{prop Y and Z} and Lemma 
\ref{lem P-coeff} that, for $n\geq2$,
\be\label{eq b-rec}
b_n = -(a_{n,n}+c_n)^{-1} \sum_{k=1}^{n-1} a_{n,k} b_k.
\ee
Recall that $b_0=0$, $b_1=1$. The above equation shows that the $b_n$'s
are recursively determined by the $a_{n,k}$'s and $c_n$'s, which only depend on the
Bernoulli convolution $Z$. As $n\to\infty$, the sequence $(b_n)$
converges to $1/\bbE[Y]$ --- here we use our assumption that the
measure $\mu$ does not have an atom at $0$. It follows that $\bbE[Y]$
and $\bbE[(1-Y)^n]$ are determined by the Bernoulli convolution for
all $n$. Then all moments of the original measure $\mu$ are known,
hence the measure $\mu$ itself (see \cite
[Theorem~1.2]{billingsley}). 
\end{proof}

\section{Non-uniqueness when $p=\nicefrac12$ (proof of Theorem~\ref{thm non-unique})}  
\label{sec non-unique}

In this section we set $p=\nicefrac12$, unless indicated otherwise.  
We also assume that the Bernoulli convolution of parameter $\nicefrac12$ 
has a density $q(x)$ 
with respect to  Lebesgue measure,
and that $q(x)>0$ for all $x \in (0,1)$. 
This will hold in particular in the case of
$\mathtt{GEM}(\theta)$. 
Since $p=\nicefrac12$ we then have that 
$q(x) = q(1-x)$ because \smash{$Z \eqd 1-Z$}.

Given a nonnegative measurable function $\rho$ on $[0,1]$, we define
the function $H\rho$ by 
\be
\label{def op H}
[H\rho](x) = \frac1{q(x)} \int_0^x q\Bigl( \frac{x-u}{1-u} \Bigr) \frac{\rho(u)}{1-u} \dd u.
\ee
Let $\caR_q$ be the cone of nonnegative measurable functions $\rho$
such that the integral above is finite 
for all $0\leq x\leq 1$.
$H$ is a linear operator on $\caR_q$. As it turns out, it
gives a relation between the density $\rho$ of a probability measure
on $[0,1]$, and the density $q$ of the corresponding Bernoulli
convolution.  This may be seen by expanding the stochastic
  identity of Lemma \ref{lem stoch id} (b) and making a suitable
  change of variables.  More precisely, we have:

\begin{lemma}
\label{lem characterisation}
Let $q$ be a probability density function on $[0,1]$ such that $q(x) >
0$ on $(0,1)$ and $q(x) = q(1-x)$. Let $\rho \in \caR_q$;  we have
\be\label{eq H}
[H\rho](x)+[H\rho](1-x)=2, \quad
\text{for almost all } x \in [0,1],
\ee
if and only if
\begin{itemize}
\item[(a)] $\rho$ is a probability density function on $[0,1]$, and
\item[(b)] the Bernoulli($\nicefrac12$) convolution of the residual
  allocation model from $\rho$ has density $q$. 
\end{itemize}
\end{lemma}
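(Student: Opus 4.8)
The plan is to read the distributional fixed-point equation of Lemma~\ref{lem stoch id}(b), specialised to $p=\nicefrac12$, at the level of densities; the operator $H$ is exactly what then appears. First I would record two change-of-variables identities, valid for every $\rho\in\caR_q$:
\[
q(x)\,[H\rho](x)=\int_0^x \frac{\rho(u)}{1-u}\,q\Bigl(\tfrac{x-u}{1-u}\Bigr)\dd u,
\qquad
q(x)\,[H\rho](1-x)=\int_0^{1-x}\frac{\rho(u)}{1-u}\,q\Bigl(\tfrac{x}{1-u}\Bigr)\dd u.
\]
The first is just \eqref{def op H}; the second follows from it using the symmetry $q=q(1-\cdot)$ together with $\tfrac{1-x-u}{1-u}=1-\tfrac{x}{1-u}$. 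When $\rho$ is a probability density, the two right-hand sides are precisely the densities of $Y+(1-Y)Z'$ and of $(1-Y)Z'$, where $Y\sim\rho$ is independent of $Z'$ with density $q$; averaging over $\eps\sim\mathtt{Bernoulli}(\nicefrac12)$, the variable $\eps Y+(1-Y)Z'$ then has density $x\mapsto\tfrac12 q(x)\bigl([H\rho](x)+[H\rho](1-x)\bigr)$. Since all integrands are nonnegative, Fubini--Tonelli is available with no integrability hypothesis; integrating the first identity over $x$ and substituting $t=\tfrac{x-u}{1-u}$ (and using $\int_0^1 q=1$) gives $\int_0^1 q(x)[H\rho](x)\,\dd x=\int_0^1\rho$ for every $\rho\in\caR_q$.

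For the implication ``(a) and (b) $\Rightarrow$ \eqref{eq H}'': if $\rho$ is a probability density whose residual allocation model has Bernoulli($\nicefrac12$) convolution with density $q$, then Lemma~\ref{lem stoch id} gives $Z\eqd\eps Y+(1-Y)Z'$ with $Z$ of density $q$ and $Z'$ an independent copy. Matching densities with the computation above forces $q(x)=\tfrac12 q(x)\bigl([H\rho](x)+[H\rho](1-x)\bigr)$ for almost every $x$; dividing by $q(x)$, which is positive on $(0,1)$, yields \eqref{eq H}.

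For the converse ``\eqref{eq H} $\Rightarrow$ (a) and (b)'', I would first deduce (a). Integrating \eqref{eq H} against $q(x)\,\dd x$, the left-hand side equals $\int_0^1 q(x)[H\rho](x)\,\dd x+\int_0^1 q(x)[H\rho](1-x)\,\dd x$, the two integrals being equal by $q=q(1-\cdot)$, while the right-hand side is $\int_0^1 2q=2$; hence $\int_0^1 q(x)[H\rho](x)\,\dd x=1$, and combined with $\int_0^1 q(x)[H\rho](x)\,\dd x=\int_0^1\rho$ from the first step this gives $\int_0^1\rho=1$, so $\rho$ is a probability density, which is (a). For (b), let $Z_q$ have density $q$ and $Z_q'$ be an independent copy; by the density formula from the first step and by \eqref{eq H}, the variable $\eps Y+(1-Y)Z_q'$ has density $\tfrac12 q(x)\cdot 2=q(x)$, so $Z_q\eqd\eps Y+(1-Y)Z_q'$, i.e.\ $Z_q$ satisfies Lemma~\ref{lem stoch id}(b). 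By that lemma $Z_q$ then satisfies (a), $Z_q\eqd\sum_{i\geq1}\eps_iX_i$ with the $X_i$ the residual allocation from $\rho$; the law of this sum is by definition the Bernoulli($\nicefrac12$) convolution of $\rho$, so that convolution has density $q$, which is (b).

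I expect the main obstacle to be the preliminary step, in particular the identification of the density of $(1-Y)Z'$ with $q(x)[H\rho](1-x)$: this is the one place where the symmetry $q(x)=q(1-x)$ enters, and it is what makes both conditional densities expressible through the single operator $H$. Beyond that, one must be a little careful with the division by $q(x)$ and the ``for almost all $x$'' clause near the endpoints $0$ and $1$, and, in the converse, with verifying that $\rho$ is integrable before calling it a density --- this is not part of the definition of $\caR_q$ and is obtained only after identifying $\int_0^1\rho$ with $\int_0^1 q\,[H\rho]$.
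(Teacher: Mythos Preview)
Your proof is correct and follows essentially the same route as the paper's: both arguments rest on the observation that $q(x)\bigl([H\rho](x)+[H\rho](1-x)\bigr)/2$ is the density of $\eps Y+(1-Y)Z'$ when $Y\sim\rho$ and $Z'$ has density $q$, and both use the same Fubini-plus-substitution computation to show $\int q\,[H\rho]=\int\rho$. The only difference is presentational: you identify the density of $\eps Y+(1-Y)Z'$ directly, while the paper reaches the same conclusion by integrating against test functions $f$ (your formulation is arguably cleaner, and your explicit remark that the symmetry $q(x)=q(1-x)$ is what lets $[H\rho](1-x)$ encode the density of $(1-Y)Z'$ is a nice touch the paper leaves implicit).
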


\begin{proof}
Assume that \eqref{eq H} holds.
For (a), we have, writing $h(x)=[H\, \rho](x)$,
\be\begin{split}
1&=
\int_0^1 q(x) \tfrac{h(x)+h(1-x)}{2}\,\dd x 
=\int_0^1 q(x) h(x) \,\dd x
=\int_0^1 \dd u\,\rho(u)\int_u^1 \dd z
\tfrac1{1-u}q\big(\tfrac{z-u}{1-u}\big)\\
&=\int_0^1 \dd u\,\rho(u)\int_0^1 \dd v\, q(v) 
=\int_0^1 \dd u\,\rho(u),
\end{split}\ee
as claimed.  (We used the change of variables $v=\tfrac{z-u}{1-u}$.)

For (b), we use \eqref{eq H} to get
\be\label{eq:tmp2}
q(x) = \tfrac12 \int_0^x q\Bigl( \frac{x-u}{1-u} \Bigr) \frac{\rho(u)}{1-u} \dd u + \tfrac12 \int_0^{1-x} q\Bigl( \frac{x}{1-u} \Bigr) \frac{\rho(u)}{1-u} \dd u.
\ee
It follows that for all continuous function $f$, we have
\be\label{eq:tmp1}
\begin{split}
\int_0^1 & q(x) f(x) \, \dd x\\
& = \tfrac12 \int_0^1 \frac{\rho(u)}{1-u} \dd u \int_u^1 q \Bigl( \frac{x-u}{1-u} \Bigr) f(x) \dd x + \tfrac12 \int_0^1 \frac{\rho(u)}{1-u} \dd u \int_0^{1-u} q \Bigl( \frac{x}{1-u} \Bigr) f(x) \dd x \\
&=  \tfrac12 \int_0^1 \rho(u) \dd u \int_0^1 f \bigl( u + (1-u)y \bigr) q(y) \dd y + \tfrac12 \int_0^1 \rho(u) \dd u \int_0^1 f \bigl( (1-u)y \bigr) q(y) \dd y.
\end{split}
\ee
We used Fubini's theorem to get the second line, and the changes of
variables $y = \frac{x-u}{1-u}$ and $y = \frac{x}{1-u}$ (for fixed
$u$) to get the third line. The left side gives the expectation
$\bbE[f(Z)]$ for the random variable with density $q$. The right side
gives $\bbE[f(\eps Y+(1-Y)Z)]$ for the independent random variables
$\eps \sim \mathtt{Bernoulli}(\frac12)$, $Y$ with density $\rho$, and
$Z$ with density $q$. We recognise the stochastic identity of Lemma
\ref{lem stoch id} (b). Hence $q$ is the density of the Bernoulli
convolution of~$\rho$. 

The other implication can be checked similarly:
\eqref{eq:tmp1} holds by (b), hence also \eqref{eq:tmp2} for
  almost all $x$, which gives \eqref{eq H}.
\end{proof}

The next step is to identify the Bernoulli convolution of
$\mathtt{GEM}$ distributions. It turns out to be equal to Beta random
variables. We consider general parameters $p$, although we only need
the case $p = \nicefrac12$ here.

\begin{proposition}\label{prop gem-beta}
Let $\theta>0$ and $p\in[0,1]$. Then the Bernoulli convolution of $\mathtt{GEM}(\theta)$, i.e.\ of the residual allocation model from $\mathtt{Beta}(1,\theta)$ random variables, is the $\mathtt{Beta}(p\theta,(1-p)\theta)$ distribution.
\end{proposition}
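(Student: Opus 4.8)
The plan is to verify the stochastic identity of Lemma~\ref{lem stoch id}(b) directly for the candidate distribution, and then invoke Lemma~\ref{lem stoch id} to conclude. Concretely, let $Y \sim \mathtt{Beta}(1,\theta)$, let $\eps \sim \mathtt{Bernoulli}(p)$ be independent of $Y$, and let $W \sim \mathtt{Beta}(p\theta,(1-p)\theta)$ be independent of both. It suffices to show that
\be
\eps Y + (1-Y) W \eqd W,
\ee
because then the residual allocation model from $\mu = \mathtt{Beta}(1,\theta)$ has a Bernoulli($p$) convolution $Z$ satisfying $Z \eqd \eps Y + (1-Y) Z'$ with $Z' \eqd W$, and by the equivalence in Lemma~\ref{lem stoch id} this forces $Z \eqd W$, i.e.\ $Z \sim \mathtt{Beta}(p\theta,(1-p)\theta)$. (One should note in passing that $\mu(\{0\})=0$ so that the residual allocation indeed sums to $1$ almost surely, and check the degenerate cases $p=0$, $p=1$ separately, where $\mathtt{Beta}(0,\theta)=\delta_0$ and $\mathtt{Beta}(\theta,0)=\delta_1$ match $Z=0$ and $Z=1$.)

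The identity $\eps Y + (1-Y)W \eqd W$ is where the real work sits, and I would prove it using the classical beta--gamma calculus. Recall that if $G_a, G_b$ are independent $\mathtt{Gamma}(a,1)$ and $\mathtt{Gamma}(b,1)$ variables, then $G_a/(G_a+G_b) \sim \mathtt{Beta}(a,b)$ and is independent of $G_a + G_b \sim \mathtt{Gamma}(a+b,1)$. Thus I would realise $Y = G_1/(G_1 + G_\theta)$ and, independently, $W = G_{p\theta}'/(G_{p\theta}' + G_{(1-p)\theta}')$. On the event $\{\eps = 1\}$ (probability $p$) one computes
\be
\eps Y + (1-Y)W = Y + (1-Y)W = \frac{G_1 + G_\theta W}{G_1 + G_\theta},
\ee
and on $\{\eps = 0\}$ (probability $1-p$) one has $\eps Y + (1-Y)W = (1-Y)W = \frac{G_\theta W}{G_1+G_\theta}$. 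The cleanest route is to compute, for $\lambda \ge 0$, the Laplace-type functional $\bbE[(\eps Y + (1-Y)W)^s(1-\eps Y - (1-Y)W)^t]$ — or, even more simply, to show directly that the density of $\eps Y + (1-Y)W$ equals the $\mathtt{Beta}(p\theta,(1-p)\theta)$ density. For the density computation: condition on $\eps$ and use the change of variables; the $\eps=0$ contribution is the density of a product $(1-Y)W$ of independent betas (with $1-Y \sim \mathtt{Beta}(\theta,1)$), which is a known Dirichlet-type calculation, and likewise for $\eps=1$ where $Y + (1-Y)W = 1 - (1-Y)(1-W)$ with $1-Y \sim \mathtt{Beta}(\theta,1)$ and $1-W \sim \mathtt{Beta}((1-p)\theta, p\theta)$. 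Adding the two contributions weighted by $1-p$ and $p$ should telescope to the claimed beta density.

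The main obstacle I anticipate is the bookkeeping in this density (or moment) identity: one must carefully track the beta normalising constants $B(\cdot,\cdot)$ and the ranges of the convolution-type integrals so that the $p$-weighted and $(1-p)$-weighted pieces combine into a single beta density on all of $(0,1)$. A slicker alternative that avoids most of this computation is a pure beta-algebra argument: note that $G_1 + G_\theta W$ and $G_\theta(1-W)$ are, given the gamma realisations, the two ``blocks'' of a size-biased splitting, and one can check that $(G_1, G_\theta W, G_\theta(1-W))$ has, after an $\eps$-randomisation that moves $G_1$ into the first or second block with probabilities $p$ and $1-p$, the structure of independent gammas of shapes summing correctly to $p\theta$ and $(1-p)\theta$; taking the first-block fraction then yields $\mathtt{Beta}(p\theta,(1-p)\theta)$ by the beta--gamma identity. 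Whichever route one takes, once $\eps Y + (1-Y)W \eqd W$ is established the proposition follows immediately from Lemma~\ref{lem stoch id}.
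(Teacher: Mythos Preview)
Your approach is correct and genuinely different from the paper's.  The paper does \emph{not} verify the fixed-point identity of Lemma~\ref{lem stoch id}(b); instead it realises $\mathtt{PD}(\theta)$ via the gamma subordinator (points $\xi_i$ of a Poisson process with intensity $\theta x^{-1}e^{-x}\,\dd x$, normalised by their sum $S\sim\mathtt{Gamma}(\theta,1)$), then thins the Poisson process with the Bernoulli marks $\eps_i$ to obtain two independent Poisson processes whose sums $Y_1,Y_0$ are $\mathtt{Gamma}(p\theta,1)$ and $\mathtt{Gamma}((1-p)\theta,1)$, and reads off $Z=Y_1/(Y_1+Y_0)\sim\mathtt{Beta}(p\theta,(1-p)\theta)$ from the beta--gamma lemma.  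So the paper's proof is a global structural argument about the whole partition at once, whereas yours is a local one-step verification of the recursion.

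Your ``slicker'' paragraph is in fact very close to a clean proof and deserves to be made explicit.  With $G_1\sim\mathtt{Gamma}(1,1)$ independent of $G_\theta\sim\mathtt{Gamma}(\theta,1)$ independent of $W$, set $A=G_\theta W$ and $B=G_\theta(1-W)$; the beta--gamma algebra gives $A\sim\mathtt{Gamma}(p\theta,1)$, $B\sim\mathtt{Gamma}((1-p)\theta,1)$ independent, and then $\eps Y+(1-Y)W$ equals $(G_1+A)/(G_1+A+B)$ on $\{\eps=1\}$ and $A/(G_1+A+B)$ on $\{\eps=0\}$.  This mixture is $p\cdot\mathtt{Beta}(1+p\theta,(1-p)\theta)+(1-p)\cdot\mathtt{Beta}(p\theta,1+(1-p)\theta)$, and the identity $B(1+p\theta,(1-p)\theta)=p\,B(p\theta,(1-p)\theta)$ (and its symmetric partner) makes the two densities add to the $\mathtt{Beta}(p\theta,(1-p)\theta)$ density via $x+(1-x)=1$.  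What the paper's route buys is a conceptual explanation of \emph{why} the answer is Beta (it drops out of the subordinator picture with no computation), and it generalises immediately to other partition structures built from subordinators.  What your route buys is self-containment: it uses only Lemma~\ref{lem stoch id} and elementary beta--gamma identities, with no appeal to the $\mathtt{PD}$/Poisson-process construction.
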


This result is not new, see e.g.\ \cite[Prop.\ 27(iii)]{Pit3}.
We sketch a proof using
the connection between $\mathtt{GEM}(\theta)$
and $\mathtt{PD}(\theta)$, 
Kingman's characterization of $\mathtt{PD}(\theta)$ in
terms of the Gamma-subordinator, as well as  the following
well-known lemma (see e.g.\ \cite[Lemma 7.4]{GUW}):

\begin{lemma}\label{beta-gamma-lem}
If $Y_1$ and $Y_2$ are independent, with respective distributions
$\mathtt{Gamma}(\theta_1,1)$ and $\mathtt{Gamma}(\theta_2, 1)$, then
\begin{enumerate}
\item $Y_1+Y_2$ has distribution $\mathtt{Gamma}(\theta_1+\theta_2,1)$,
\item $Y_1/(Y_1+Y_2)$ has distribution $\mathtt{Beta}(\theta_1,\theta_2)$,
\item $Y_1+Y_2$ and $Y_1/(Y_1+Y_2)$ are independent.
\end{enumerate}
\end{lemma}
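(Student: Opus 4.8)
\textbf{Proof plan for Lemma \ref{beta-gamma-lem}.}
The plan is to compute directly with densities, exploiting the change of variables $(y_1,y_2)\mapsto(s,t)$ where $s=y_1+y_2$ is the sum and $t=y_1/(y_1+y_2)$ is the ratio. Since $Y_1$ and $Y_2$ are independent, the joint density of $(Y_1,Y_2)$ on $(0,\infty)^2$ is
\be
f_{Y_1,Y_2}(y_1,y_2)=\frac{1}{\Gamma(\theta_1)\Gamma(\theta_2)}\,y_1^{\theta_1-1}y_2^{\theta_2-1}\e{-y_1-y_2}.
\ee
First I would invert the map: $y_1=st$, $y_2=s(1-t)$, with $(s,t)$ ranging over $(0,\infty)\times(0,1)$, and record the Jacobian
\be
\frac{\partial(y_1,y_2)}{\partial(s,t)}=\det\begin{pmatrix} t & s \\ 1-t & -s\end{pmatrix}=-st-s(1-t)=-s,
\ee
so $|\det|=s$.

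Substituting, the joint density of $(S,T)=(Y_1+Y_2,\,Y_1/(Y_1+Y_2))$ becomes
\be
f_{S,T}(s,t)=\frac{(st)^{\theta_1-1}\bigl(s(1-t)\bigr)^{\theta_2-1}\e{-s}\,s}{\Gamma(\theta_1)\Gamma(\theta_2)}
=\frac{s^{\theta_1+\theta_2-1}\e{-s}}{\Gamma(\theta_1+\theta_2)}\cdot\frac{\Gamma(\theta_1+\theta_2)}{\Gamma(\theta_1)\Gamma(\theta_2)}t^{\theta_1-1}(1-t)^{\theta_2-1}.
\ee
This factorises as a product of a function of $s$ alone and a function of $t$ alone, which simultaneously proves all three claims: the $s$-factor is exactly the $\mathtt{Gamma}(\theta_1+\theta_2,1)$ density (claim 1), the $t$-factor is exactly the $\mathtt{Beta}(\theta_1,\theta_2)$ density (claim 2, using the Beta–Gamma identity $B(\theta_1,\theta_2)=\Gamma(\theta_1)\Gamma(\theta_2)/\Gamma(\theta_1+\theta_2)$ for the normalising constant), and the fact that the joint density is a product of the two marginals gives independence (claim 3). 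One should note in passing that each factor is already correctly normalised, which both confirms the computation and is needed to identify the marginals.

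There is no serious obstacle here; the only point requiring a little care is keeping track of the Jacobian factor $s$ and checking that the powers of $s$ combine correctly, namely $(\theta_1-1)+(\theta_2-1)+1=\theta_1+\theta_2-1$, so that the $s$-dependence is precisely that of a $\mathtt{Gamma}(\theta_1+\theta_2,1)$ variable. Alternatively, one could phrase the argument via moment generating functions or via the classical representation of the Dirichlet distribution, but the direct density computation is the cleanest and is essentially self-contained.
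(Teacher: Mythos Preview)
Your proof is correct and is the standard density/change-of-variables argument for this classical fact. The paper itself does not actually give a proof of this lemma: it simply states it as ``well-known'' and cites \cite[Lemma 7.4]{GUW}, so there is no paper proof to compare against; your direct computation is exactly the kind of argument one finds behind such a citation.
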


\begin{proof}[Proof of Proposition \ref{prop gem-beta}]
Let $\xi=(\xi_1,\xi_2,\dotsc)$ be the points of a Poisson process with
intensity measure $\theta x^{-1}\mathrm{e}^{-x}\, \dd x$ on $(0,\oo)$ 
in decreasing order. 
Let $S = \sum_{i\geq1} \xi_i$ and $X_i = \xi_i/S$ for all $i\geq 1$,
then  $S\sim\mathtt{Gamma}(\theta,1)$ and $X = (X_1, X_2, \ldots)$ is
$\mathtt{PD}(\theta)$-distributed  (see~\cite[Definition~2.5]{Ber}).
Let $(\varepsilon_i)_{i\geq 1}$ be a sequence of i.i.d.\ $\mathtt{Bernoulli}(p)$ random variables. Let $\xi^{\sss (1)}$ be the collection $(\xi_i\colon \varepsilon_i = 1)$ and $\xi^{\sss (0)}$ its complement $(\xi_i \colon \varepsilon_i = 0)$.
Note that  $\xi^{\sss (1)}$ and $\xi^{\sss (0)}$ are independent Poisson processes with respective intensity measures $p\theta x^{-1}\mathrm e^{-x}\, \dd x$ and 
$(1-p)\theta x^{-1}\mathrm e^{-x}\, \dd x$ on $(0,\oo)$.
Set \smash{$Y_1=\sum_{i\geq1} \xi^{\sss (1)}_i$} and 
\smash{$Y_0=\sum_{i\geq1} \xi^{\sss (0)}_i$}.  Then $Y_1$ and $Y_0$ have
distributions $\mathtt{Gamma}(p\theta,1)$ and $\mathtt{Gamma}((1-p)\theta,1)$
respectively (this can be checked using the Laplace transform and
Campbell's formula as in~\cite[Lemma 7.3]{GUW}).
Since $Z = Y_1/(Y_0+Y_1)$, Lemma \ref{beta-gamma-lem} implies that 
$Z\sim \mathtt{Beta}(p\theta, (1-p)\theta)$, which concludes the
proof. 
\end{proof}

We now consider a special case of Theorem~\ref{thm non-unique}, namely $\theta=2$.

\begin{proposition}
\label{prop GEM(2)}
Let $\rho$ be a probability density function on $[0,1]$ such that
$\int_0^1 \frac{\rho(u)}{1-u} \, \dd u < \infty$. Then the corresponding
residual allocation model has the same Bernoulli($\nicefrac12$)
convolution as $\mathtt{GEM}(2)$, if and only if 
\be
\label{cond GEM(2)}
x \rho(x) = (1-x) \rho(1-x)\quad\text{for almost all } x \in [0,1].
\ee

\end{proposition}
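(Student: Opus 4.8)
The plan is to reduce the statement to Lemma~\ref{lem characterisation} once the relevant density $q$ is identified. First I would apply Proposition~\ref{prop gem-beta} with $\theta=2$ and $p=\nicefrac12$: the Bernoulli($\nicefrac12$) convolution of $\mathtt{GEM}(2)$ is $\mathtt{Beta}(1,1)$, i.e.\ the uniform distribution on $[0,1]$, so its density is $q\equiv 1$. This $q$ is a probability density with $q>0$ on $(0,1)$ and $q(x)=q(1-x)$, so Lemma~\ref{lem characterisation} is applicable. With $q\equiv 1$ the operator simplifies to $[H\rho](x)=\int_0^x \frac{\rho(u)}{1-u}\,\dd u=:F(x)$, and the hypothesis $\int_0^1\frac{\rho(u)}{1-u}\,\dd u<\infty$ is exactly what guarantees $\rho\in\caR_q$ (the integral defining $[H\rho](x)$ is increasing in $x$ and bounded by $F(1)<\infty$). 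Since $\rho$ is assumed to be a probability density, condition (a) of Lemma~\ref{lem characterisation} is automatic; and since $\mathtt{Beta}(1,1)$ is absolutely continuous, the residual allocation from $\rho$ has the same Bernoulli($\nicefrac12$) convolution as $\mathtt{GEM}(2)$ precisely when it has density $q$, i.e.\ precisely when condition (b) holds. So it remains to show that \eqref{eq H}, which here reads $F(x)+F(1-x)=2$ for almost every $x\in[0,1]$, is equivalent to \eqref{cond GEM(2)}.

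For the implication \eqref{cond GEM(2)}$\Rightarrow$\eqref{eq H}: since $\tfrac{\rho(\cdot)}{1-\cdot}\in L^1([0,1])$, the function $F$ is absolutely continuous on $[0,1]$ with $F'(x)=\tfrac{\rho(x)}{1-x}$ for a.e.\ $x$, so $x\mapsto F(x)+F(1-x)$ is absolutely continuous with a.e.\ derivative $\tfrac{\rho(x)}{1-x}-\tfrac{\rho(1-x)}{x}$, which vanishes a.e.\ by \eqref{cond GEM(2)}; hence $F(x)+F(1-x)$ is constant. To evaluate the constant I would read it off at $x=1$, where $F(0)=0$ and, using $\tfrac1{1-u}=1+\tfrac{u}{1-u}$ together with \eqref{cond GEM(2)} in the form $\tfrac{u\rho(u)}{1-u}=\rho(1-u)$, one gets $F(1)=\int_0^1\rho(u)\,\dd u+\int_0^1\rho(1-u)\,\dd u=2$ (both integrals are finite and equal to $1$, the second by the substitution $u\mapsto 1-u$). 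Thus the constant is $2$, which is \eqref{eq H}. For the converse, if $F(x)+F(1-x)=2$ for a.e.\ $x$ then, both sides being continuous in $x$, the identity holds for every $x\in[0,1]$; differentiating a.e.\ gives $\tfrac{\rho(x)}{1-x}=\tfrac{\rho(1-x)}{x}$ for a.e.\ $x$, which is \eqref{cond GEM(2)}. Combining the two directions with Lemma~\ref{lem characterisation} yields the proposition.

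The argument is short once $q$ has been identified as $q\equiv 1$, and I do not expect a serious obstacle. The points that need care are: that $F$ is absolutely continuous up to the endpoint $x=1$ (which is exactly what the integrability hypothesis supplies, and what allows the constant to be read off at $x=1$); the passage from ``a.e.'' to ``everywhere'' in the continuous identity $F(x)+F(1-x)=2$ before differentiating; and keeping track of which relations hold a.e.\ versus pointwise. The only genuinely non-mechanical step is the evaluation of the constant via the splitting $\tfrac1{1-u}=1+\tfrac{u}{1-u}$ and the change of variables $u\mapsto 1-u$.
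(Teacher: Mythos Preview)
Your proposal is correct and follows essentially the same route as the paper: identify $q\equiv 1$ via Proposition~\ref{prop gem-beta}, reduce to Lemma~\ref{lem characterisation}, and show that \eqref{eq H} with $q\equiv1$ is equivalent to \eqref{cond GEM(2)} by differentiating in one direction and using the splitting $\tfrac1{1-u}=1+\tfrac{u}{1-u}$ together with $u\mapsto1-u$ in the other. The only cosmetic difference is that for the implication \eqref{cond GEM(2)}$\Rightarrow$\eqref{eq H} the paper obtains $F(x)+F(1-x)=F(1)$ directly by the substitution $u\mapsto1-u$ in the second integral (which, under \eqref{cond GEM(2)}, turns $\int_0^{1-x}\tfrac{\rho(u)}{1-u}\,\dd u$ into $\int_x^1\tfrac{\rho(v)}{1-v}\,\dd v$), whereas you argue via absolute continuity that $F(x)+F(1-x)$ has a.e.\ vanishing derivative and then read off the constant at $x=1$; both are fine.
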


Note that
there exist many solutions to \eqref{cond GEM(2)}: Starting from an
arbitrary nonnegative integrable function $f$ on $[0,\frac12]$, one can set $f(x)
= \frac{1-x}x f(1-x)$ for $x \in (\frac12,1]$ and take $\rho(x) = f(x)
/ \int f$.  As mentioned before, 
the density of the $\mathtt{Beta}(1,2)$ random
variable is $2(1-x)$ and it satisfies Eq.\ \eqref{cond GEM(2)}.

\begin{proof}
The  Bernoulli$(\nicefrac12)$ convolution of $\mathtt{GEM}(2)$ is equal to $\mathtt{Beta}(1,1)$, i.e.\ the uniform probability measure on $[0,1]$, by Proposition \ref{prop gem-beta}. The operator $H$ takes a simpler form and Eq.\ \eqref{eq H} becomes
\be
\int_0^x \frac{\rho(u)}{1-u}\,\dd u + \int_0^{1-x} \frac{\rho(u)}{1-u}\,\dd u =2,
\ee
for all $x\in[0,1]$. We get \eqref{cond GEM(2)} by differentiating with respect to $x$. This proves the ``only if" direction.

Conversely, if $\rho$ is a probability density function on $[0,1]$ that satisfies Eq.\ \eqref{cond GEM(2)}, then
\be
\begin{split}
[H\rho](x) + [H\rho](1-x) &= \int_0^x \frac{\rho(u)}{1-u} \dd u + \int_0^{1-x} \frac{\rho(u)}{1-u} \dd u = \int_0^1 \frac{\rho(u)}{1-u} \dd u \\
&= \int_0^1 \frac{\rho(u)}{1-u} (1-u+u) \dd u = \int_0^1 \rho(u) \dd u + \int_0^1 \rho(1-u) \dd u = 2,
\end{split}
\ee
and \eqref{eq H} holds true.
\end{proof}

The case of the $\mathtt{GEM}(\theta)$ distribution with $\theta\neq2$ is more complicated and we do not give a full characterisation of all possibilities. We only prove the existence of many solutions.

We rely on the theory of fractional derivatives and integrals, see
e.g.\ \cite[Ch 1]{SKM} for an extended exposition. For $\alpha>0$, let
$\cI^\alpha$ denote the \emph{fractional integral operator} (in the
sense of 
Riemann--Liouville): 
\be
[\cI^\a f](x)=\frac1{\G(\a)}\int_0^x \frac{f(u)}{(x-u)^{1-\alpha}} \, \dd
u, \ee for all $x\in[0,1]$ and all functions $f$ such that the above
integral converges absolutely. Its inverse is the \emph{fractional
derivative operator} $\cD^\a$.  Writing
$\alpha = [\a] + \{\a\}$ with $[\a] \in \bbN_0$ and $\{\a\} \in [0,1)$, 
 it is given by 
\be
[\cD^\a f](x)=\frac1{\G(1-\{\a\})} \frac{\dd^{[\a]+1}}{\dd x^{[\a]+1}}
\int_0^x\frac{f(t)}{(x-t)^{\{\a\}}} \, \dd t.
\ee 
We introduce the function $\varphi$ on $[0,1]$ by 
\be 
\varphi(u) =
\frac{\rho(u)}{(1-u)^{\theta-1}}.  
\ee 
We now rewrite Eq.\ \eqref{eq H} using the fractional 
integral operator in the case where the
probability density $q$ is that of
$\mathtt{Beta}(\nicefrac\theta2,\nicefrac\theta2)$. Taking $q(x) =
\frac{\Gamma(\theta)}{\Gamma(\nicefrac\theta2)^2}
x^{\nicefrac\theta2-1} (1-x)^{\nicefrac\theta2-1}$ in Eq.\ 
\eqref{def op H}, Lemma~\ref{lem characterisation} can be reformulated as
follows.

\begin{lemma}
\label{lem partial int condition}
Let $\theta>0$. Assume that $\varphi$ is a nonnegative function on $[0,1]$ that satisfies
\be
\label{skvoll!}
\frac1{x^{\nicefrac\theta2-1}} [\cI^{\nicefrac\theta2} \varphi](x) +
\frac1{(1-x)^{\nicefrac\theta2-1}} [\cI^{\nicefrac\theta2}
\varphi](1-x) = \frac2{\Gamma(\nicefrac\theta2)} \quad
\text{for all } x\in[0,1].
\ee
Then $\rho(x) = (1-x)^{\theta-1} \varphi(x)$ is a probability function
on $[0,1]$ and the Bernoulli($\nicefrac12$) convolution 
of the residual allocation model from $\rho$ has
density $\mathtt{Beta}(\nicefrac\theta2,\nicefrac\theta2)$. 
\end{lemma}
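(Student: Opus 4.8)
The plan is to recognise Lemma~\ref{lem partial int condition} as precisely the special case of Lemma~\ref{lem characterisation} in which $q$ is the density of $\mathtt{Beta}(\nicefrac\theta2,\nicefrac\theta2)$, merely rewritten in the language of Riemann--Liouville fractional integrals. First I would observe that, by Proposition~\ref{prop gem-beta} applied with $p=\nicefrac12$, the function $q(x)=\tfrac{\G(\theta)}{\G(\nicefrac\theta2)^2}x^{\nicefrac\theta2-1}(1-x)^{\nicefrac\theta2-1}$ is the density of the Bernoulli($\nicefrac12$) convolution of $\mathtt{GEM}(\theta)$; moreover it is strictly positive on $(0,1)$ and symmetric, $q(x)=q(1-x)$, so the hypotheses of Lemma~\ref{lem characterisation} hold for this $q$.

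The substantive step is a single computation of $[H\rho](x)$ from \eqref{def op H} with this $q$. Using the identity $1-\tfrac{x-u}{1-u}=\tfrac{1-x}{1-u}$ one gets $q\bigl(\tfrac{x-u}{1-u}\bigr)=\tfrac{\G(\theta)}{\G(\nicefrac\theta2)^2}(x-u)^{\nicefrac\theta2-1}(1-x)^{\nicefrac\theta2-1}(1-u)^{2-\theta}$, and since $\rho(u)=(1-u)^{\theta-1}\varphi(u)$ the factor $(1-u)^{2-\theta}\cdot\tfrac{\rho(u)}{1-u}$ collapses to $\varphi(u)$. Substituting into \eqref{def op H}, pulling the $x$-dependent powers out of the integral, and dividing by $q(x)$, the common factors $x^{\nicefrac\theta2-1}$ and $(1-x)^{\nicefrac\theta2-1}$ cancel and one is left with
\[
[H\rho](x)=\frac{\G(\nicefrac\theta2)}{x^{\nicefrac\theta2-1}}\cdot\frac1{\G(\nicefrac\theta2)}\int_0^x (x-u)^{\nicefrac\theta2-1}\varphi(u)\,\dd u=\frac{\G(\nicefrac\theta2)}{x^{\nicefrac\theta2-1}}\,[\cI^{\nicefrac\theta2}\varphi](x).
\]
Consequently the defining relation \eqref{eq H} of Lemma~\ref{lem characterisation}, namely $[H\rho](x)+[H\rho](1-x)=2$, becomes, after dividing through by $\G(\nicefrac\theta2)$, exactly equation \eqref{skvoll!}.

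To conclude I would check the one remaining hypothesis, that $\rho\in\caR_q$, i.e.\ that the integral in \eqref{def op H} is finite for all $x$: this is automatic, because \eqref{skvoll!} is assumed to hold with finite right-hand side for every $x\in[0,1]$, so each of its two nonnegative summands is bounded, whence $[\cI^{\nicefrac\theta2}\varphi](x)<\infty$ and hence the integral converges. Lemma~\ref{lem characterisation} then gives both conclusions of Lemma~\ref{lem partial int condition}: $\rho$ is a probability density on $[0,1]$, and the Bernoulli($\nicefrac12$) convolution of its residual allocation has density $q$, that is $\mathtt{Beta}(\nicefrac\theta2,\nicefrac\theta2)$. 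I do not expect any genuine obstacle here --- the argument is just a substitution dressed in fractional-calculus notation --- the only points needing a little care being the bookkeeping of the constant $\G(\theta)/\G(\nicefrac\theta2)^2$ and the sign of the exponent $\nicefrac\theta2-1$ (negative for $\theta<2$, positive for $\theta>2$); in both regimes the cancellation of the $x$- and $(1-x)$-powers between $q\bigl(\tfrac{x-u}{1-u}\bigr)$ and $q(x)$ is purely algebraic. For $\theta=2$ this recovers Proposition~\ref{prop GEM(2)}, since then $\cI^1$ is ordinary integration and $x^{\nicefrac\theta2-1}\equiv 1$.
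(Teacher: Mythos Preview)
Your proposal is correct and is exactly the approach the paper takes: the paper simply states that Lemma~\ref{lem partial int condition} is Lemma~\ref{lem characterisation} ``reformulated'' by substituting the $\mathtt{Beta}(\nicefrac\theta2,\nicefrac\theta2)$ density into \eqref{def op H}, without spelling out the computation. Your write-up fills in precisely that substitution and the cancellation of the $(1-u)$- and $(1-x)$-powers, arriving at $[H\rho](x)=\G(\nicefrac\theta2)\,x^{1-\nicefrac\theta2}[\cI^{\nicefrac\theta2}\varphi](x)$, so there is nothing to add.
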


The claim about non-uniqueness, Theorem \ref{thm non-unique}, is now a consequence of Lemma \ref{lem partial int condition}.

\begin{proof}[Proof of Theorem \ref{thm non-unique}]
We are looking for nonnegative solutions $\varphi$ of \eqref{skvoll!};
then $\rho(x) = (1-x)^{\theta-1} \varphi(x)$ is a solution. Let $\eps$
be a function on $[0,1]$ that is antisymmetric around $\nicefrac12$, i.e.\
$\eps(x) = -\eps(1-x)$, and consider the equation
\be
\label{eq for phi}
[\cI^{\nicefrac\theta2} \varphi](x) = \frac2{\Gamma(\nicefrac\theta2)} \bigl[ x^{\nicefrac\theta2} + x^{\nicefrac\theta2-1} \eps(x) \bigr]
\ee
with $x \in [0,1]$. Solutions of this equation are also solutions of
\eqref{skvoll!}. Applying the fractional derivative operator on both
sides, and using 
$\cD^\alpha \cI^\alpha = \mathtt{id}$, we get
\be
\begin{split}
\varphi(x) &= \frac2{\Gamma(\nicefrac\theta2)} \cD^{\nicefrac\theta2} \bigl[ x^{\nicefrac\theta2} + x^{\nicefrac\theta2-1} \eps(x) \bigr](x) \\
&= \frac2{\Gamma(\nicefrac\theta2) \Gamma(1-\{\nicefrac\theta2\})} \frac{\dd^{[\nicefrac\theta2]+1}}{\dd x^{[\nicefrac\theta2]+1}} \int_0^x \frac{t^{\nicefrac\theta2} + t^{\nicefrac\theta2-1} \eps(t)}{(x-t)^{\{\nicefrac\theta2\}}} \dd t.
\end{split}
\ee
Conversely, if we assume in addition that $\eps(x) = \mathcal{O}(x)$ at $x=0$, we can use \cite[Eq.\ (2.60)]{SKM} to verify that \eqref{eq for phi} is satisfied. Indeed, all derivatives in \cite[Eq.\ (2.60)]{SKM} vanish at $x=0$.

The contribution of the term $t^{\nicefrac\theta2}$ can be calculated explicitly; it gives the constant $\theta$. We can also make the change of variables $t \mapsto ux$ and we get
\be
\varphi(x) = \theta + \frac2{\Gamma(\nicefrac\theta2) \Gamma(1-\{\nicefrac\theta2\})} \frac{\dd^{[\nicefrac\theta2]+1}}{\dd x^{[\nicefrac\theta2]+1}} \biggl[ x^{[\nicefrac\theta2]} \int_0^1 \frac{u^{\nicefrac\theta2-1} \eps(ux)}{(1-u)^{\{\nicefrac\theta2\}}} \dd u \biggr].
\ee
The case $\eps\equiv0$ leads to $\varphi(x) = \theta$, i.e.\ $\rho = \mathtt{Beta}(1,\theta)$.
But we can also choose $\eps\not\equiv0$ to be small and smooth enough such that the last
term is uniformly bounded by $\theta$. Then $\varphi(x) \geq0$ for all
$x\in[0,1]$. 
\end{proof}

\section{Comments}
\label{sec questions}

\subsection{Other examples of non-uniqueness for $p=\nicefrac12$}

For $p=\nicefrac12$ there is another example of non-uniqueness of
the Bernoulli convolution for $\mathtt{GEM}(2)$, using the Brownian
bridge.  Namely, let $X_1\geq X_2\geq\dotsb$ be a ranked list of the
excursion lengths away from 0 of a standard Brownian bridge on
$[0,1]$, and let $\eps_i$ be the indicator that the bridge is positive
on the corresponding excursion.  Then the 
$\eps_i$ are i.i.d.\ 
$\mathtt{Bernoulli}(\nicefrac12)$, independent of the
$X_i$, and the Bernoulli($\nicefrac12$) 
convolution $Z=\sum_{i\geq1}\eps_i X_i$ equals
the time spent positive by the bridge.  L\'evy showed that the latter
is uniformly distributed on $[0,1]$, which as we saw 
coincides with the
Bernoulli($\nicefrac12$) convolution of $\mathtt{GEM}(2)$.
See e.g.\ \cite[Section~2.4]{Pit3} for more information.

We can also use the Brownian pseudo-bridge to get an example of
non-uniqueness of the Bernoulli($\nicefrac12$) convolution for
$\mathtt{GEM}(1)$. Indeed, the ranked list of excursions is given by
the two-parameter Poisson-Dirichlet distribution
$\mathtt{PD}(\nicefrac12,0)$ and the time spent positive is
$\mathtt{Beta}(\nicefrac12,\nicefrac12)$; see \cite{PY}.

\subsection{Further questions}

It would be interesting to investigate the extent to which Theorems
\ref{thm unique} and \ref{thm non-unique} hold for other classes of
random partitions $(X_i)_{i\geq1}$ than those formed by 
residual allocation.  
One could for example consider more general residual allocation
models where the sequence $(Y_i)$ is not i.i.d.\ but e.g.\
given by a discrete-time stochastic  process.
Another natural class of random partitions 
are those built from
subordinators (see \cite[Section 5.2]{Pit3}).
Briefly, in this case $(X_i)_{i\geq1}$ is formed by normalising 
an exhaustive list of the jumps of a subordinator with no drift
component.  We pose the following two questions:

\begin{question}
Are there analogs of Theorems \ref{thm unique} and 
\ref{thm non-unique}  for random partitions built from subordinators? 
\end{question}

\begin{question}
For $p \neq\nicefrac12$,
are there natural examples of 
random partitions whose Bernoulli convolutions
are identical to those of $\mathtt{GEM}(\theta)$, or other 
residual allocation models? 
\end{question}

\subsection{Holroyd's example}

If one makes \emph{no} assumptions about the structure of the
partition $(X_i)_{i\geq1}$ then the Bernoulli convolution does not
determine the law of the random partition, even if the former is known
for \emph{all} $p\in(0,1)$.  This is shown by the following example
due to A.\ Holroyd.

The example deals with partitions of just three elements.  We
consider random variables $X_1,X_2,X_3$ such that $X_1 \geq X_2 \geq
X_3 \geq 0$ and $X_1 + X_2 + X_3 =1$, as well as independent
\texttt{Bernoulli}($p$) random variables $\eps_1,\eps_2,\eps_3$.  
The first observation is that
the law of the Bernoulli($p$)-convolution 
$Z$ is determined by the marginals for $X_1$, $X_2$, and
$X_1+X_2$ --- no matter what the values of $\eps_1,\eps_2,\eps_3$ are,
the random variables $X_1$ and $X_2$ appear in the above form.  This
holds for all $p$.  It is thus enough to show that we can find random
variables $\tilde X_1$ and $\tilde X_2$, distinct from $X_1,X_2$, such
that
\be
\begin{split}
&\tilde X_1 \eqd X_1 \\
&\tilde X_2 \eqd X_2 \\
&\tilde X_1 + \tilde X_2 \eqd X_1 + X_2.
\end{split}
\ee

Let $f(x_1,x_2)$ denote the joint probability density function of
$(X_1,X_2)$. It is supported on the set $\Delta \in [0,1]^2$ such that
\be
\label{domain Delta} 
x_1 \geq x_2 \geq 1-x_1-x_2 \geq 0, 
\ee 
see Fig.\ \ref{fig holroyd}. 
We can find a square in the set $\Delta$, and define
the function $g(x_1,x_2)$ that takes values $\{-1,0,+1\}$ as shown in
Fig.\ \ref{fig holroyd}.  If $f$ is positive on $\Delta$, then $\tilde f =
f + \eta g$ is positive for $\eta$ small enough. The function $\tilde
f$ is the probability density function for $(\tilde X_1, \tilde X_2)$.

The marginals for $X_1, \tilde X_1$ are obtained by integrating $f,
\tilde f$ along vertical lines. They are clearly identical. Same for
the marginals for $X_2,\tilde X_2$, obtained by integrating along
horizontal lines. And same for the marginals for $X_1+X_2, \tilde X_1
+ \tilde X_2$, obtained by integrating along oblique lines of slope
$-1$.

\bfig
\includegraphics[scale=.5]{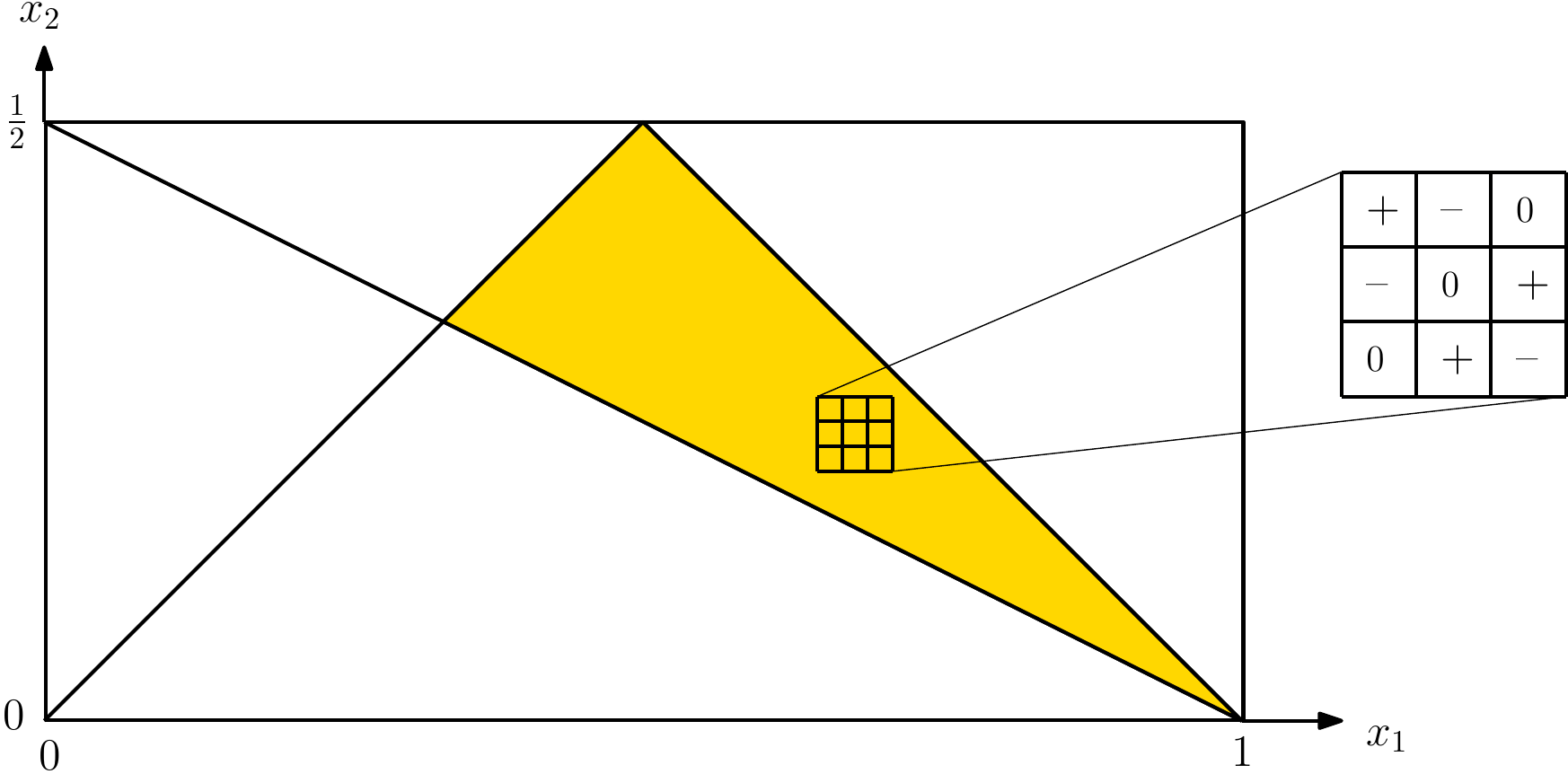}
\caption{The domain $\Delta$ characterised by \eqref{domain Delta}, 
and the square that defines the function $g$.}
\label{fig holroyd}
\efig

Holroyd's example can be generalised to random partitions with
infinitely many elements as follows. Let $a \in (\frac34,1]$. Choose
$(X_1,X_2,X_3)$ with the constraint $X_1 + X_2 + X_3 = a$; then choose
an arbitrary random partition on the remaining interval $[0,1-a]$. The
domain \eqref{domain Delta} is replaced by $x_1 \geq x_2 \geq
a-x_1-x_2 \geq 1-a$ (it is nonempty for $a>\nicefrac34$). The same
argument then applies. It is also possible to take $a$ to be random.

\section{Acknowledgements}

We thank Christina Goldschmidt for discussions and for pointing out the examples from the Brownian bridge and pseudo-bridge, and the reference \cite{Pit3}; Jon Warren for further discussions about the pseudo-bridge; Jeff Steif and
Johan Tykesson for discussions about their paper \cite{ST};
and Alexander Holroyd for explaining his example.
We also thank the three referees for positive and helpful comments.
We gratefully acknowlege support from the UoC Forum \emph{Classical and quantum dynamics of interacting particle systems}.
JEB gratefully acknowledges support from \emph{Vetenskapsr{\aa}det} grant
2015-0519 and \emph{Ruth och Nils-Erik Stenb\"acks stiftelse}. CM is grateful to the EPSRC for support through the fellowship EP/R022186/1.

\renewcommand{\refname}{\small References}
\bibliographystyle{symposium}

\end{document}